\newtheorem{definition}{Definition}[section]
\newtheorem{lem}{Lemma}[section]
\newtheorem{theorem}{Theorem}[section]
\newtheorem{prop}{Proposition}[section]
\newtheorem{rem}{Remark}[section]
\newtheorem{conj}{Conjecture}
\newcommand\bi{{\bf i}}
\newcommand{\R}{\mathbb{R}}
\newcommand{\N}{\mathbb{N}}
\newcommand{\Z}{\mathbb{Z}}
\newcommand{\dimh}{\mbox{dim}_{\mathcal{H}}}
\newcommand{\dimbsup}{\overline{\mbox{dim}_{\mathcal{B}}}}
\begin{document}

\title{Multifractal formalism for typical probability measures on self-similar sets }
\author{Moez Ben Abid$^{\dag}$}

\maketitle

\begin{abstract}
In this work, we investigate the H\"older spectrum of typical measure (in the Baire category sense) in a general compact set and  we compute the multifractal spectrum of a typical measures supported by a self-similar set. Such mesures verify the multifractal formalism.

\bigskip
\noindent\emph{2000 Mathematics Subject Classification}: 28A80.

\noindent\emph{Key words and phrases}: Borel measures, Hausdorff dimension, singularity spectrum, multifractal formalism, self-similar sets, Baire categories.

\medskip
\noindent$^{\dag}$ \'{E}cole Sup\'{e}rieure des Sciences et de Technologie de Hammam Sousse (Tunisia).
\end{abstract}

\section{Introduction and the main result}
Let $K$ be a compact set of $\R^{d}$ endowed with the metric induced by any norm on $\R^{d}$.

The local H\"older exponent of a positive measure $\mu$ on $K$ at $x\in K$, $h_{\mu}(x)$, is defined by
\[h_{\mu}(x)=\liminf_{r\to0}\frac{\log\mu(B(x,r))}{\log r}\]
where $B(x,r)$ is the ball of center $x$ and radius $r$.
The purpose of the multifractal analysis of a  measure $\mu$ is to investigate the singularity spectrum $d_{\mu}$ of $\mu$, that is the map
\[d_{\mu}:h\geq0\mapsto \dimh(E_{\mu}(h)) \]
where $E_{\mu}(h)=\left\{x\in K:\; h_{\mu}(x)=h\right\}$ and $\dimh$ is the Hausdorff dimension.

Generally it is very difficult to obtain the singularity spectrum directly from the definition of the Hausdorff dimension. To avoid this difficulty, the multifractal formalism provide a formula which link the singularity spectrum to the Legendre transform of mapping defined by averaged quantities of the measure, precisely to the Legendre transform of the  $L^{q}$ spectrum defined as follows.
If $j$ is an integer greater than $1$ let $\mathcal{G}_{j}$ be the partition of $\R^{d}$ into dyadic boxes: $\mathcal{G}_{j}$ is the set of all cubes
\[I_{j,\mathbf{k}}=\prod_{i=1}^{d}\left[ k_{i}2^{-j},(k_{i}+1)2^{-j}\right[\]
where $\mathbf{k}=(k_{1},k_{2},\cdots,k_{d})\in \Z^{d}$.

The $L^{q}$ spectrum of a measure $\mu\in\mathcal{M}(K)$ is the mapping defined for any $q\in\R$ by
\[\tau_{\mu}(q)=\liminf_{j\to+\infty}-\frac{\log\sum_{Q\in\mathcal{G}_{j},\mu(Q)\neq0 }\mu(Q)^{q}}{j\log2}.\]

A classical result (see for example \cite{Fal1}) assert that for all measure $\mu$ for all $h\geq 0$,
\begin{equation}\label{majSpectrumLegendre}
    d_{\mu}(h)\leq \left(\tau_{\mu}\right)^{*}(h):=\inf_{q\in\R}(qh-\tau_{\mu}(q)).
\end{equation}

A important issue in multifractal analysis is to establish when the upper bound (\ref{majSpectrumLegendre}) turns out to be an equality , when this happens we say that the measure $\mu$ \emph{satisfies the multifractal formalism} at $h$. A lot of work has been achieved for specific measures.

\medskip
In the few last years, a particular interest was allocated to generic results (in the sense of Baire or prevalence) on the space of the probability measures endowed with the weak topology or in some space of functions, see for examples \cite{BucNag}, \cite{BucSeu1}, \cite{BucSeu2}, \cite{FraJaf}, \cite{FraJafKah}, \cite{Ols1}, \cite{Ols2}.

\medskip
We denote by $\mathcal{M}(K)$ the space of probability measures on $K$ endowed with the weak topology. Recall that the weak topology on $\mathcal{M}(K)$ is induced by the metric $\varrho$ on $\mathcal{M}(K)$ defined as follows. Let $\mbox{Lip}(K)$ denote the family of Lipschitz functions $f:K\rightarrow\R$ with $\left|f\right|:=\sup_{x\in K}\left|f(x)\right|\leq 1$ and $Lip(f)\leq 1$ where $Lip(f)$ denotes the Lipschitz constant of $f$. If $\mu$ and $\nu$ belong to $\mathcal{M}(K)$ we set

\[\varrho(\mu,\nu)=\sup\left\{\left|\int f d\mu-\int f d\nu\right|: f\in \mbox{Lip}(K)\right\}.\]
then the space $\mathcal{M}(K)$ is complete and separable.

\medskip
In \cite{BucSeu1}, the authors determined the multifractal spectrum for typical measures $\mu$ (in the Baire sens) in $[0,1]^{d}$ and they showed that such measures satisfy the multifractal formalism. They also made the following conjecture

\begin{conj}
For any compact set $K\subset\R^{d}$, there exists a constant $0<D<d$ such that typical measures $\mu$ (in the Baire sens) in $\mathcal{M}(K)$ satisfy: for any $h\in[0,D]$, $d_{\mu}(h)=h$, and if $h>D$, $E_{\mu}(h)=\emptyset$.
\end{conj}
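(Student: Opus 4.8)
We treat the self-similar case, which is what the paper's methods cover: $K$ is the attractor of contracting similarities $S_1,\dots,S_m$ with ratios $c_1,\dots,c_m$ obeying the open set condition, and we take $D=\dimh K$, which here equals the similarity dimension and $\dimbsup K$, so $0<D<d$ unless $K$ has interior. Let $\mu_*$ be the natural self-similar measure, which is $D$-regular: $\mu_*(B(x,r))\asymp r^D$ for $x\in K$, $0<r<1$. Two facts are automatic. Since every $\mu\in\mathcal M(K)$ has $\sum_{Q\in\mathcal G_j,\,\mu(Q)\ne0}\mu(Q)=1$, we get $\tau_\mu(1)=0$, hence $(\tau_\mu)^*(h)\le h$, and (\ref{majSpectrumLegendre}) gives $d_\mu(h)\le h$ for every $h$. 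More sharply, a $5r$-covering argument gives $\dimh\{x\in K:h_\mu(x)\le h\}\le h$ for every $\mu$ and $h$ (if $h_\mu(x)\le h$, pick arbitrarily small balls $B\ni x$ with $\mu(B)\ge(\mathrm{rad}\,B)^{h+\ep}$ and disjointify: $\sum(\mathrm{rad}\,B_i)^{h+\ep}\le\sum\mu(B_i)\le1$). So it suffices to build a dense $G_\delta$ set $\mathcal G\subset\mathcal M(K)$ of measures $\mu$ with $E_\mu(h)=\emptyset$ for $h>D$ and $\dimh E_\mu(h)\ge h$ for every $h\in[0,D]$; the conjecture follows, with $D=\dimh K$.

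\medskip
\emph{Step 1: emptiness above $D$.} For $n\ge1$ set
\[U_n=\bigcup_{j\ge n}\Big\{\mu\in\mathcal M(K):\ \mu(\mathrm{int}\,3Q)>2^{-j(D+1/n)}\ \text{for every }Q\in\mathcal G_j\text{ meeting }K\Big\},\]
$3Q$ being the concentric triple of $Q$. Only finitely many $Q\in\mathcal G_j$ meet $K$, and $\mu\mapsto\mu(V)$ is lower semicontinuous for open $V$, so $U_n$ is open; it is dense because $(1-\delta)\mu+\delta\mu_*\in U_n$ for every $n$ (for $j$ large its $\mathrm{int}\,3Q$-mass is at least $\delta\,\mu_*(\mathrm{int}\,3Q)\gtrsim\delta\,2^{-jD}>2^{-j(D+1/n)}$) and lies within $2\delta$ of $\mu$. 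If $\mu\in\bigcap_nU_n$ and $x\in K$, pick for each $n$ an admissible $j_n\ge n$ and the cube $Q\in\mathcal G_{j_n}$ containing $x$; then $\mathrm{int}\,3Q\subset B(x,C2^{-j_n})$, so $\mu(B(x,C2^{-j_n}))>2^{-j_n(D+1/n)}$ and the corresponding exponents tend to a value $\le D$. Hence $h_\mu(x)\le D$ for all $x\in K$, i.e. $E_\mu(h)=\emptyset$ for $h>D$.

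\medskip
\emph{Step 2: the full sub-diagonal spectrum (the heart).} For $\beta\in\mathbb Q\cap[0,D]$, $\ep\in\mathbb Q_{>0}$, $n\ge1$ and a dyadic cube $Q_0$ meeting $K$, let $V_{\beta,\ep,n,Q_0}$ be the set of $\mu$ admitting, inside $Q_0$, a finite Moran skeleton of dyadic cubes running through every generation from some level onward up to some $M\ge n$, branching at cumulative rate $\ge2^{\beta-\ep}$ per generation, and such that on each of its cubes $Q$ at generation $j$,
\[2^{-j(\beta+\ep)}<\mu(\mathrm{int}\,Q)\qquad\text{and}\qquad\mu(\overline{3Q})<2^{-j(\beta-\ep)}.\]
The first inequality is lower semicontinuous (on an open set) and the second upper semicontinuous (on a closed set), and only finitely many cubes occur for fixed $M$, so $V_{\beta,\ep,n,Q_0}$ is open. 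It is dense: given $\mu$ and $\delta>0$, divert an $\ep$-small fraction of the mass of $\mu$ onto such a skeleton, woven through cubes meeting $K$ whose $\overline{3Q}$ is $\mu$-light (all but an $o(2^{jD})$-fraction of the generation-$j$ cubes) and along which $K$ --- being self-similar --- branches at rate $\asymp2^D\ge2^\beta$ (for $\beta=0$ put the diverted mass at one point of a deep subcube of $Q_0$); the result lies in $V_{\beta,\ep,n,Q_0}$ for $n$ large and within $2\delta$ of $\mu$. Set
\[\mathcal G=\Big(\bigcap_nU_n\Big)\cap\bigcap_{\beta,\ep,n,Q_0}V_{\beta,\ep,n,Q_0},\]
a dense $G_\delta$. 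For $\mu\in\mathcal G$ and $h\in[0,D]$, choose $\beta_i\to h$ in $\mathbb Q\cap[0,D]$ and $\ep_i\downarrow0$ and build a Moran set stage by stage: at stage $i$ extend the current skeleton below each of its leaves $Q$ using $\mu\in V_{\beta_i,\ep_i,\cdot,Q}$, arranging that the stages cover all generations beyond the first. The nested intersection $C_h\subset K$ carries its natural Moran measure, whose mass on a generation-$j$ cube is $2^{-jh+o(j)}$ (both the branching and the displayed mass bounds stay near the rate $\beta_i\to h$), so $\dimh C_h\ge h$ by the mass distribution principle; and for $y\in C_h$ the two inequalities force $\mu(B(y,r))=r^{h+o(1)}$ as $r\to0$, so $h_\mu(y)=h$ exactly and $C_h\subset E_\mu(h)$. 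Real $h$ are handled by letting the rationals $\beta_i$ approach them inside the construction. With Step~1 and the automatic upper bounds, every $\mu\in\mathcal G$ satisfies $d_\mu(h)=h$ for $h\in[0,D]$ and $E_\mu(h)=\emptyset$ for $h>D$ (and, on $[0,D]$, the multifractal formalism, since $(\tau_\mu)^*(h)=h$ there).

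\medskip
\emph{The main obstacle.} The reductions and Step~1 are routine. The difficulty is Step~2: the lower bound $\dimh E_\mu(h)\ge h$ is a covering/``$\limsup$''-type quantity, neither upper nor lower semicontinuous in $\mu$, so it must be encoded by countably many genuinely open conditions bearing only on $\mu$-masses of finitely many cubes. This is exactly why one needs the two-sided pinching --- $\mu(\mathrm{int}\,Q)$ bounded below \emph{and} $\mu(\overline{3Q})$ bounded above at \emph{every} generation: the first keeps the H\"older exponent from exceeding $h$, the second keeps it from dropping below, and only both together make $E_\mu(h)$, rather than the larger set $\{h_\mu\le h\}$, the one whose dimension one controls. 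The remaining delicate point is bookkeeping: a single measure in the countable intersection must carry these woven-in skeletons below every cube it does not already vanish on, nested and reaching arbitrarily deep without generation gaps --- and it is precisely self-similarity and the open set condition of $K$ that furnish the necessary ambient supply of light, high-branching cubes, and that let one compare $\mu(B(x,r))$ with the $\mu$-mass of the generation-$\approx\log_2(1/r)$ piece containing $x$, on which the whole scheme rests.
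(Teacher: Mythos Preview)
Your reductions and Step~1 are fine. Step~2 takes a route genuinely different from the paper's and, as written, has a real gap in the gluing. Your open sets $V_{\beta,\ep,n,Q_0}$ only guarantee a skeleton from \emph{some} level $\ell$ up to some $M\ge n$, with $\ell$ uncontrolled; when you extend below a stage-$i$ leaf $Q$ at generation $M_i$ via $\mu\in V_{\beta_{i+1},\ep_{i+1},\cdot,Q}$, the new skeleton may begin at $\ell_{i+1}\gg M_i$, leaving a block of generations with no upper bound $\mu(\overline{3Q})<2^{-j(\beta-\ep)}$. But $h_\mu(y)\ge h$ requires $\mu(B(y,r))\le r^{h-o(1)}$ at \emph{every} small $r$, not along a subsequence, so uncontrolled gaps destroy the conclusion $h_\mu(y)=h$; your phrase ``arranging that the stages cover all generations'' is exactly what needs proof. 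Forcing $\ell=\mathrm{gen}(Q_0)+1$ in the definition of $V$ would close that hole but makes density hard --- for arbitrary $\mu$ you would need enough $\mu$-light cubes immediately below $Q_0$, which can fail if $\mu$ concentrates there --- and even with the present freedom your density sketch (a tree of $\mu$-light cubes with branching $2^{\beta-\ep}$ at every level) already elides a nontrivial combinatorial argument.

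The paper avoids all of this by never seeking two-sided control. Its $G_\delta$ set is $\bigcap_N\bigcup_{k\ge N}B(\mu_k,2^{-sJ_k^2})$ for explicit measures $\mu_k$ (a discrete measure on the self-similar pieces mixed with a dense sequence); for $\mu$ in it and each $\theta\ge1$ one obtains only the \emph{one-sided} inclusion $\Lambda_\theta\subset\{h_\mu\le s/\theta\}$, via lower bounds on $\mu$ alone, and then builds a Cantor set $\mathcal C_\theta\subset\Lambda_\theta$ carrying a measure $m_\theta$ with $\dimh m_\theta\ge s/\theta$. The passage from $\{h_\mu\le h\}$ to $E_\mu(h)$ is indirect: write $\Lambda_\theta=(\Lambda_\theta\cap E_\mu(h))\cup\bigcup_n\{h_\mu\le h-\tfrac1n\}$, note that each set in the union has Hausdorff dimension $<h\le\dimh m_\theta$ by the universal bound you also cite and is therefore $m_\theta$-null, whence $m_\theta(E_\mu(h))>0$ and $\dimh E_\mu(h)\ge h$. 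That decoupling --- lower bounds on $\mu$ to land in $\{h_\mu\le h\}$, then a dimension-of-measure argument to carve out the exact level set --- is the device your two-sided scheme lacks, and it is what lets the paper dispense with upper mass control and hence with the gluing problem entirely.
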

Whether $D$ should be the Hausdorff dimension of $K$ or the lower box dimension of $K$ (or another dimension).

\medskip
In this paper, we give a positive answer to this conjecture in the special case where $K$ is a self-similar set satisfying the open set condition.

\medskip
Let $X$ be a complete metric space. We say that a set $A$ of $X$ is a $G_{\delta}$ set if it can be written as a countably intersection  of dense open sets. We say that a property is typical in $X$ if it holds on residual set, i.e. a set with complement of first Baire category. By the Baire theorem  any $G_{\delta}$ set is dense.

\medskip
Our main result is the following

\begin{theorem}\label{maintheorem}
Let $K$ be a self-similar set satisfying the open set condition. Let $s$ be the Hausdorff dimension of $K$. Then, there exists a $G_{\delta}$ set $\Omega$ of $\mathcal{M}(K)$ such that for all $\mu\in \Omega$,
\begin{itemize}
\item for all $h>s$, $E_{\mu}(h)=\emptyset$
\item for all $h\in[0,s]$, $d_{\mu}(h)=h$.
\end{itemize}
In particular, for every $q\in[0,1]$, $\tau_{\mu}(q)=s(q-1)$, and $\mu$ satisfies the multifractal formalism at every $h\in[0,s]$, i.e. $d_{\mu}(h)=\left(\tau_{\mu}\right)^{*}(h)$.
\end{theorem}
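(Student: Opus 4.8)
The plan is to construct the residual set $\Omega$ as a $G_\delta$ set by intersecting two families of open dense sets: one forcing an upper bound $h_\mu(x)\le s$ everywhere (so that $E_\mu(h)=\emptyset$ for $h>s$), and one forcing, for each rational $h\in[0,s]$ and each $n$, the existence of a set of dimension close to $h$ on which the local exponent is close to $h$. The natural building blocks are finitely-supported atomic-type measures, or more precisely measures of the form $\mu=\sum_{\omega\in W}p_\omega\,\mu_\omega$ where $W$ is a finite antichain in the coding tree of the IFS, $\mu_\omega$ is (a smoothed version of) the normalized restriction of the natural self-similar measure to the cylinder $K_\omega$, and the weights $p_\omega$ are dyadic. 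Such measures are dense in $\mathcal M(K)$ because cylinders shrink and weak convergence is controlled by $\varrho$; this density is what makes each of the constructed sets open \emph{and} dense.

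Here is the order of the steps. First, I would fix notation for the IFS $\{f_i\}_{i=1}^m$ with ratios $r_i$ satisfying the open set condition, recall Hutchinson's theorem that $\sum r_i^s=1$, and record the key metric estimate: for a cylinder $\omega$ of generation $k$, $\mathrm{diam}(K_\omega)\asymp r_\omega:=r_{\omega_1}\cdots r_{\omega_k}$, and balls of radius $r$ meet a controlled number of cylinders of comparable diameter (bounded overlap from the OSC). Second, for the upper bound: I would show that for \emph{every} $\mu\in\mathcal M(K)$ and every $x\in K$, one has $h_\mu(x)\le s$ is \emph{not} automatic, so instead I force it generically — given $\mu$ and $\varepsilon>0$, a small perturbation spreads a definite amount of mass $\delta$ into every generation-$k$ cylinder for $k$ large, which forces $\mu(B(x,r))\ge c\,r^{s+\varepsilon}$ eventually for all $x$; taking the intersection over $\varepsilon=1/n$ gives $h_\mu(x)\le s$ on a $G_\delta$ set. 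Third, for the lower bound $d_\mu(h)\ge h$: for fixed rational $h\in[0,s]$, I would build, by a Cantor-type construction inside the cylinder tree, a perturbed measure that on a subtree of cylinders behaves like $r_\omega^{\,h}$ up to scale, so that on the limiting Cantor set $C_h$ the exponent equals $h$ and a mass distribution / Frostman argument on $C_h$ gives $\dimh C_h\ge h-1/n$; openness comes from the fact that only finitely many generations need to be pinned down. Fourth, the reverse inequality $d_\mu(h)\le h$ for all $h$ follows from \eqref{majSpectrumLegendre} once I show $\tau_\mu(q)\ge s(q-1)$ for $q\in[0,1]$ generically (equivalently $(\tau_\mu)^*(h)\le h$): this is again a genericity statement, obtained by forcing $\sum_{Q\in\mathcal G_j}\mu(Q)^q\le 2^{-j(1-q)s+o(j)}$ along a subsequence, using that a generic $\mu$ charges roughly $2^{js/?}$... more carefully, using the partition by cylinders of comparable size (about $N\asymp 2^{js}$ of them) and Hölder/Jensen to compare dyadic and cylinder sums. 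Finally, assembling: $\Omega$ is the intersection of the $G_\delta$ set from step two, the sets $\bigcap_{h\in\mathbb Q\cap[0,s]}\bigcap_n(\cdots)$ from step three, and the set from step four; each is $G_\delta$ dense, so $\Omega$ is $G_\delta$ dense, and on $\Omega$ we get $E_\mu(h)=\emptyset$ for $h>s$, $h\le d_\mu(h)\le h$ hence $d_\mu(h)=h$ for $h\in[0,s]$ (extending from rationals by the upper-semicontinuity-free direct argument on each $C_h$), $\tau_\mu(q)=s(q-1)$ on $[0,1]$, and therefore $(\tau_\mu)^*(h)=h=d_\mu(h)$ on $[0,s]$, which is the multifractal formalism.

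The main obstacle, I expect, is the interplay between the \emph{dyadic} grid $\mathcal G_j$ appearing in $\tau_\mu$ and the \emph{self-similar} cylinder structure in which the perturbations are naturally defined: the cylinders $K_\omega$ are not aligned with dyadic cubes and their diameters $r_\omega$ are not powers of $2$, so passing from a clean estimate like $\mu(K_\omega)\asymp r_\omega^{\,h}$ to a bound on $\sum_{Q\in\mathcal G_j}\mu(Q)^q$ requires a careful bounded-covering argument (each dyadic cube of side $2^{-j}$ meets $O(1)$ cylinders of diameter $\asymp 2^{-j}$, by the OSC), and one must make sure the $o(j)$ errors from this comparison and from the ``smoothing'' of the atomic perturbations do not destroy the exact value $s(q-1)$. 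A secondary delicate point is ensuring the Cantor sets $C_h$ constructed in step three genuinely achieve Hausdorff dimension $h$ and not merely box dimension $h$; this needs the standard mass-distribution principle applied to the measure supported on $C_h$, with the Frostman exponent tuned to $h-1/n$, and the selection of which cylinders to keep at each generation chosen (using $\sum r_i^s=1$) so that the surviving mass and the surviving ``geometric size'' stay in the ratio dictated by the exponent $h$.
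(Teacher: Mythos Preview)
Your plan has the right ingredients (dense perturbations, Cantor subsets, mass distribution), but step three contains a genuine gap, and the architecture differs from the paper's in a way that matters. You propose, for each rational $h\in[0,s]$ and each $n$, an open dense set forcing $\dimh E_\mu(h)\ge h-1/n$, and then intersect over $\mathbb Q\cap[0,s]$ and over $n$. This yields $d_\mu(h)=h$ only for rational $h$; your parenthetical ``extending from rationals'' is not justified. The level sets $E_\mu(h)$ are pairwise disjoint and $h\mapsto d_\mu(h)$ has no a priori regularity, so knowing $d_\mu(h)=h$ on a dense set of exponents says nothing about an irrational $h_0$ --- in principle $E_\mu(h_0)$ could be empty for every irrational $h_0$ while $d_\mu(h)=h$ on $\mathbb Q$. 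A countable intersection of open sets cannot encode uncountably many exponents one at a time.

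The paper avoids this by reversing the order of quantifiers. It defines a \emph{single} $G_\delta$ set $\Omega''=\bigcap_n\bigcup_{k\ge n}B(\mu_k,2^{-sJ_k^2})$, where $\mu_k$ is a dense sequence perturbed by the discrete measure $\lambda_k=\sum_{\mathbf i\in I(2^{-J_k})}\alpha_{\mathbf i}^s\,\delta_{x_{\mathbf i}}$. For any $\mu\in\Omega''$ one extracts a subsequence $(N_p)$ with $\varrho(\mu,\mu_{N_p})<2^{-sJ_{N_p}^2}$, and \emph{then}, for every real $\theta\ge1$, defines the limsup set $\Lambda_\theta=\bigcap_P\bigcup_{p\ge P}\bigcup_{\mathbf i}\overline B(x_{\mathbf i},2^{-\theta J_{N_p}})$ and builds inside it a Cantor set $\mathcal C_\theta$ carrying a Frostman measure $m_\theta$ of exponent $s/\theta$. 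The same subsequence $(N_p)$ serves for all $\theta$ simultaneously, so one $G_\delta$ set handles every $h=s/\theta\in(0,s]$. In short: you put the $h$-dependence into the definition of the residual set; the paper puts it only into the post-hoc analysis of a fixed $\mu\in\Omega''$.

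Two smaller points. Your step four is unnecessary: $\tau_\mu(q)\ge s(q-1)$ for $q\in[0,1]$ holds for \emph{every} $\mu\in\mathcal M(K)$, by concavity of $t\mapsto t^q$ (so $\sum_Q\mu(Q)^q\le N_j(K)^{1-q}$) together with $\dimbsup K=s$; no genericity is needed, and the dyadic-versus-cylinder comparison you flag as the main obstacle is absorbed by this one-line bound. Likewise the upper bound $d_\mu(h)\le h$ is automatic for all measures via the standard inequality $\dimh\{x:h_\mu(x)\le h\}\le h$; only the lower bound $d_\mu(h)\ge h$ genuinely requires the $G_\delta$ construction.
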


Our paper is organized as follows: in the second section we show, for any  compact $K$, that for typical measure $\mu$ in $\mathcal{M}(K)$, for $h>s$, $E_{\mu}(h)=\emptyset$ where $s$ is the upper box counting dimension, this can be en particular applied to self-similar sets. In the third section we recall some properties of self-similar sets that will be useful for us. Then, using the same approach as \cite{BucSeu1} with suitable modifications we prove the Theorem \ref{maintheorem}.

\section{Results valid on compact $K$}
Let $0\leq s<+\infty$, $\lambda$ a borelian measure on $K$ and $a\in K$. We define the lower $s-$densities of $K$ at $a$ with respect to $\lambda$ by
\[\Theta^{s}_{*}(K,a,\lambda)=\liminf_{r\downarrow 0}(2r)^{-s}\lambda\left(K\cap B(a,r)\right).\]

\medskip
In this section we will prove the following theorems.

\begin{theorem}
Let $K$ be a closed set of  $\R^{d}$.
\begin{enumerate}
\item Let $a\in K$. Then, there exists a  $G_{\delta}$ set $\Omega(a)$ of $\mathcal{M}(K)$ such that for all $\mu\in \Omega(a)$, $h_{\mu}(a)=0$.
\item Let $s\in]0,d]$ and $A\subset K$. Assume that there exists $\lambda$ a finite Borel measure on $K$ such that for all $a\in A$
\[\Theta^{s}_{*}(K,a,\lambda)>0.\]

Then, there exists a  $G_{\delta}$ set $\Omega$ of $\mathcal{M}(K)$ such that  for all $\mu\in \Omega$, for all $x\in A$, $h_{\mu}(x)\leq s$. That is, for all $\mu\in \Omega$, for all $h>s$, $E_{\mu}(h)\cap A=\emptyset$.
\end{enumerate}
\label{th1}
\end{theorem}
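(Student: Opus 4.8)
The plan is to build the generic measures as small perturbations of atomic measures, using the standard Baire-category machinery: for each statement I will exhibit a countable family of dense open sets whose intersection is the desired $G_\delta$ set. For part (1), fix $a\in K$ and for each $n\geq 1$ define
\[
\mathcal{O}_n(a)=\Bigl\{\mu\in\mathcal{M}(K):\ \mu\bigl(B(a,r)\bigr)>r^{1/n}\ \text{for some }r\in(0,1/n)\Bigr\}.
\]
Each $\mathcal{O}_n(a)$ is open in the weak topology (the map $\mu\mapsto\mu(B(a,r))$ is lower semicontinuous, or one can work with a fixed Lipschitz bump function supported near $a$ to get honest continuity). Density is the heart of the matter: given any $\nu\in\mathcal{M}(K)$ and $\ep>0$, I perturb $\nu$ to $\mu=(1-\delta)\nu+\delta\,\delta_a$ with $\delta<\ep$ small; then $\mu(B(a,r))\geq\delta$ for all $r>0$, so choosing $r$ small enough that $r^{1/n}<\delta$ puts $\mu\in\mathcal{O}_n(a)$, while $\varrho(\mu,\nu)\leq 2\delta<\ep$ since the mass $\delta\nu$ that is removed and the mass $\delta\delta_a$ that is added are both bounded by $\delta$ in the Wasserstein-type metric $\varrho$ (diameter of $K$ is bounded). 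Then $\Omega(a)=\bigcap_n\mathcal{O}_n(a)$ is $G_\delta$, and any $\mu\in\Omega(a)$ satisfies $\limsup_{r\to0}\frac{\log\mu(B(a,r))}{\log r}\geq\liminf_n\frac1n\cdot\text{(along the witnessing radii)}$; more carefully, for each $n$ there is $r_n<1/n$ with $\mu(B(a,r_n))>r_n^{1/n}$, hence $\frac{\log\mu(B(a,r_n))}{\log r_n}<\frac1n\to0$, and since $h_\mu(a)\geq0$ always, this forces $h_\mu(a)=0$.

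For part (2), the issue is to make a single $G_\delta$ set work simultaneously for every $x\in A$ — an uncountable condition — so I cannot just intersect the $\Omega(a)$ over $a\in A$. The device is to use the measure $\lambda$ with positive lower $s$-density as a "template" and exploit uniformity. Since $A$ is second countable, cover $A$ by countably many pieces on which the lower density is uniformly bounded below: for integers $m,n\geq1$ set $A_{m,n}=\{a\in A:\ \lambda(K\cap B(a,r))>\tfrac1m(2r)^s\ \text{for all }r\in(0,1/n)\}$, so $A=\bigcup_{m,n}A_{m,n}$. For each triple $(m,n,k)$ define
\[
\mathcal{O}_{m,n,k}=\Bigl\{\mu\in\mathcal{M}(K):\ \text{for every }x\in A,\ \exists\, r\in(0,1/k)\ \text{with}\ \mu(B(x,r))>r^{s+1/n}\Bigr\},
\]
and take $\Omega=\bigcap_{n,k}\mathcal{O}_{*,n,k}$ (intersecting over $n,k$ and, for each, over the relevant $m$). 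For openness I again pass to fixed bump functions and use compactness of $\overline{A}$ to reduce the uncountable quantifier over $x$ to a finite net. For density, given $\nu$ and $\ep$, take the perturbation $\mu=(1-\delta)\nu+\delta\,\frac{\lambda(\cdot\cap K)}{\lambda(K)}$ with $\delta<\ep/2$; then for $x\in A_{m,n}$ and small $r$ we get $\mu(B(x,r))\geq\delta\frac{\lambda(K\cap B(x,r))}{\lambda(K)}>\frac{\delta}{m\lambda(K)}(2r)^s$, which exceeds $r^{s+1/n}$ once $r$ is small enough (here we spend the extra $1/n$ in the exponent to absorb the constant $\frac{\delta 2^s}{m\lambda(K)}$). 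So $\mu\in\mathcal{O}_{m,n,k}$ for all $k$, and $\varrho(\mu,\nu)<\ep$. Finally, any $\mu\in\Omega$ satisfies: for every $x\in A$ and every $n$, there is $r_k(x)\to0$ with $\mu(B(x,r_k))>r_k^{s+1/n}$, so $h_\mu(x)\leq s+\frac1n$ for every $n$, i.e. $h_\mu(x)\leq s$, which is exactly $E_\mu(h)\cap A=\emptyset$ for $h>s$.

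The main obstacle I anticipate is the openness of the sets $\mathcal{O}_{m,n,k}$ in part (2): the naive definition involves "$\mu(B(x,r))$" which is only semicontinuous, and worse, the quantifier "$\forall x\in A$" is not preserved under small perturbations of $\mu$ unless handled uniformly. The fix — replacing balls by a fixed finite collection of Lipschitz functions adapted to a finite $r$-net of $\overline{A}$, and choosing the thresholds with a little room to spare — is routine but must be set up carefully so that a single net works for all $\mu$ near a given one. The density step, by contrast, is the conceptually clean part: it is entirely driven by the hypothesis $\Theta^s_*(K,a,\lambda)>0$, which is precisely what lets a small amount of $\lambda$-mass pinned near $x$ force the local exponent below $s$.
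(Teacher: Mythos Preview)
Your argument for part (1) is correct and in fact a bit cleaner than the paper's: you use the Portmanteau lower semicontinuity of $\mu\mapsto\mu(B(a,r))$ to see directly that each $\mathcal{O}_n(a)$ is open, whereas the paper takes a dense sequence $(\nu_n)$, forms $\mu_n=\alpha_n\delta_a+(1-\alpha_n)\nu_n$ with explicit parameters, sets $\Omega(a)=\bigcap_N\bigcup_{k\ge N}B(\mu_k,r_k)$, and transfers the mass estimate from $\mu_{m_n}$ to $\mu$ via a Lipschitz bump. Both routes rest on the same perturbation by $\delta_a$; yours avoids the bookkeeping of tuned sequences.

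For part (2) there is a genuine gap, exactly where you flag it. Your sets $\mathcal{O}_{m,n,k}$ carry the quantifier ``for every $x\in A$'', so they are uncountable intersections of open sets and are not open a priori. The proposed repair via a finite net of $\overline A$ does not go through as written: for a given $\mu\in\mathcal{O}_{m,n,k}$ the witnessing radii $r_x$ may have no positive lower bound over $x\in A$, so there is no single scale at which to choose the net; the net points lie in $\overline A$, where the density hypothesis $\Theta^s_*>0$ is not assumed; and the strata $A_{m,n}$ on which you do have uniform bounds are not closed, so compactness is unavailable there. The non-uniformity that motivated your stratification thus reappears in the openness step, and calling the fix ``routine'' understates the difficulty.

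The paper avoids this entirely by not encoding the target property into the open sets. It fixes $\nu=\lambda/\lambda(K)$, builds the dense sequence $\mu_n=\alpha_n\nu+(1-\alpha_n)\nu_n$, and takes $\Omega=\bigcap_N\bigcup_{k\ge N}B(\mu_k,r_k)$ as a $\limsup$ of \emph{metric} balls in $\mathcal{M}(K)$, so openness is trivial and $\Omega$ makes no reference to $A$ at all. The universal quantifier over $x\in A$ only enters in the verification: once $\varrho(\mu,\mu_{m_n})<r_{m_n}$, a Lipschitz bump centred at that particular $x$ transfers the bound $\mu_{m_n}(B(x,d_{m_n}))\ge \alpha_{m_n}\nu(B(x,d_{m_n}/2))\ge c\,\alpha_{m_n}d_{m_n}^{s}$ to $\mu$, with constants and thresholds that are allowed to depend on $x$. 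This decoupling of the $G_\delta$ construction from the set $A$ is the idea your part (2) is missing.
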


\begin{rem}
Let $a\in K$. For all $h>0$ the set $\Lambda_{h}(a)=\left\{\mu\in \mathcal{M}(K);\;h_{\mu}(a)=h\right\}$ is of empty interior. Indeed, if not then using the dense $G_{\delta}$ set $\Omega(a)$ we get $\Lambda_{h}(a)\cap \Omega(a)\neq \emptyset$ which is impossible.
\end{rem}

\medskip
Let $E$ a non-empty bounded subset or $\R^{d}$ let $N_{r}(E)$ be the largest number of disjoint balls of radius $r$ with centers in $E$. The upper box-counting dimension of $E$ is defined as
\[\overline{\mbox{dim}_{B}}E=\limsup_{r\to0}\frac{\log N_{r}(E)}{-\log r}.\]

Already we can prove the following result.
\begin{theorem}
Let $K$ be a closed set of $\R^{d}$ let $s=\overline{\mbox{dim}_{B}}K$. Then, there exists a  $G_{\delta}$ set $\Omega$ of $\mathcal{M}(K)$ such that for all $\mu\in \Omega $, for all $x\in K$, $h_{\mu}(x)\leq s$. That is, for all $\mu\in \Omega$, for all $h>s$, $E_{\mu}(h)=\emptyset$.
\label{th2}
\end{theorem}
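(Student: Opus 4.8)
The plan is to derive Theorem~\ref{th2} from part~(2) of Theorem~\ref{th1}. For this I need, for each $\ep>0$, a \emph{single} finite Borel measure on $K$ having positive lower $(s+\ep)$--density at every point of $K$; applying Theorem~\ref{th1}(2) then produces a $G_\delta$ set on which $h_\mu\le s+\ep$ everywhere, and intersecting along $\ep\downarrow 0$ finishes the proof.

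First I would dispose of a trivial normalisation: we may assume $s<d$, since if $s=d$ one views $K$ as a compact subset of $\R^{d+1}$, which changes neither $\mathcal{M}(K)$, nor the quantities $h_\mu(x)$ (they only involve $\mu(B(x,r)\cap K)$), nor $\overline{\mbox{dim}_B}K$, while making the exponent $s+\ep$ admissible in Theorem~\ref{th1} for small $\ep$. Now fix $\ep\in\,]0,d-s]$. From $\overline{\mbox{dim}_B}K=s$ one obtains $j_0$ such that for every $j\ge j_0$ the set $K$ is covered by at most $M_j\le 2^{j(s+\ep/2)}$ open balls of radius $2^{-j}$ with centres $y^{(j)}_1,\dots,y^{(j)}_{M_j}$ in $K$ (take a maximal pairwise disjoint family of open balls of radius $2^{-j-1}$ centred in $K$; by maximality the concentric balls of radius $2^{-j}$ cover $K$, and their number $N_{2^{-j-1}}(K)$ is $\le 2^{j(s+\ep/2)}$ once $j$ is large). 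Put
\[
\sigma_j=\frac1{M_j}\sum_{i=1}^{M_j}\delta_{y^{(j)}_i}\in\mathcal{M}(K),
\qquad
\lambda=\sum_{j\ge j_0}\frac1{j^{2}}\,\sigma_j,
\]
a nonzero finite Borel measure on $K$ (its total mass is $\sum_{j\ge j_0}j^{-2}<\infty$). I then have to check $\Theta^{s+\ep}_{*}(K,a,\lambda)>0$ for every $a\in K$. Given small $r>0$, pick $j$ with $2^{-j}\le r<2^{1-j}$; since the balls $B(y^{(j)}_i,2^{-j})$ cover $K$ there is an index $i$ with $a\in B(y^{(j)}_i,2^{-j})$, hence $y^{(j)}_i\in K\cap B(a,r)$ and
\[
(2r)^{-(s+\ep)}\,\lambda\bigl(K\cap B(a,r)\bigr)\ \ge\ 2^{(j-2)(s+\ep)}\cdot\frac1{j^{2}M_j}\ \ge\ 2^{-2(s+\ep)}\,\frac{2^{j\ep/2}}{j^{2}},
\]
which tends to $+\infty$ as $r\downarrow 0$ (equivalently $j\to\infty$). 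Thus $\Theta^{s+\ep}_{*}(K,a,\lambda)=+\infty>0$.

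Now Theorem~\ref{th1}(2), applied with the exponent $s+\ep\in\,]0,d]$, the set $A=K$, and the measure $\lambda$, yields a $G_\delta$ set $\Omega_\ep\subseteq\mathcal{M}(K)$ such that $h_\mu(x)\le s+\ep$ for all $\mu\in\Omega_\ep$ and all $x\in K$. Taking $\ep=1/n$ for all $n$ with $1/n\le d-s$ and setting $\Omega=\bigcap_n\Omega_{1/n}$ (again a $G_\delta$ set), every $\mu\in\Omega$ satisfies $h_\mu(x)\le s+1/n$ for all $x\in K$ and all such $n$, hence $h_\mu(x)\le s$; since $\mu(B(x,r))\le 1$ always forces $h_\mu(x)\ge 0$, this means $E_\mu(h)=\emptyset$ for every $h>s$, which is the assertion.

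The substantive step is the construction of $\lambda$: a normalised measure carried by one $2^{-j}$--net is useless at points off the net at finer scales, so one is forced to superpose such measures over all scales, and the weights must be summable yet must not decay geometrically — the choice $j^{-2}$ works precisely because it absorbs the factor $2^{j\ep/2}$ coming from the slack between the covering exponent $s+\ep/2$ and the target density exponent $s+\ep$. All the genuine functional-analytic work (openness and density, in the weak topology, of the approximating sets in $\mathcal{M}(K)$) is already packaged inside Theorem~\ref{th1}, so nothing of that kind has to be redone here.
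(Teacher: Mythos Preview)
Your argument is correct and is a genuinely different route from the paper's. The paper proves Theorem~\ref{th2} \emph{directly}: it builds the dense sequence $\mu_n=\alpha_n\Pi_n+(1-\alpha_n)\nu_n$ with $\Pi_n$ the uniform atomic measure on a maximal $2^{-n}$--packing of $K$, takes $\Omega=\bigcap_N\bigcup_{k\ge N}B(\mu_k,2^{-(s+2)k})$, and then, for $\mu\in\Omega$ and any $x\in K$, uses maximality of the packing to find an atom of $\Pi_{m_n}$ inside $B(x,2\cdot 2^{-m_n})$ and a Lipschitz test function to convert this into the mass lower bound $\mu(B(x,4\cdot 2^{-m_n}))\gtrsim 2^{-tm_n}$ for every $t>s$.

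You instead \emph{reduce} to Theorem~\ref{th1}(2): for each $\ep>0$ you manufacture a single finite measure $\lambda=\sum_j j^{-2}\sigma_j$, a superposition of uniform measures on $2^{-j}$--nets, whose lower $(s+\ep)$--density is positive (in fact infinite) at every point of $K$; the polynomial weight $j^{-2}$ is exactly what allows the exponential gain $2^{j\ep/2}$ to dominate. This is neat: all the functional--analytic work (density of the perturbed sequence, the Lipschitz bump estimate) is reused from Theorem~\ref{th1}, and the embedding $K\hookrightarrow\R^{d+1}$ cleanly sidesteps the endpoint $s=d$. What the paper's direct proof buys, by contrast, is a single $G_\delta$ set obtained in one stroke, without the extra countable intersection over $\ep=1/n$, and without invoking the earlier theorem; it is self-contained but repeats the same mechanism. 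Both approaches rest on the same underlying idea---that a uniformly spread atomic measure at scale $2^{-j}$ forces nearby mass at that scale---but you package it once inside $\lambda$, while the paper threads it through the sequence $(\Pi_n)$.
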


\subsection{Proof of Theorem \ref{th1}}
In the sequel, we will always denote by $B(x,r)$ (resp. $\overline{B}(x,r)$) the open (resp. closed) ball of center $x\in X$ and radius $r$, where $X$ any metric space.

\medskip
1)
Let $a\in K$ and $s>0$. Let $(\nu_{n})$ be a dense sequence in $\mathcal{M}(K)$. Let $(d_{n})_{n}$ be a  decreasing sequence to $0$.

Let $\theta>\frac{2}{s}$. Put $\beta_{n}=\frac{1}{\log\left|\log d_{n}\right|}$.  We consider the following sequences $\alpha_{n}=d_{n}^{\beta_{n}}$, $r_{n}=d_{n}^{\theta s}$, $c_{n}=d_{n}^{\frac{\theta}{2}s}$.
Remark that all the sequences are decreasing to $0$.

\medskip
Denote by
\[\mu_{n}=\alpha_{n}\delta_{a}+(1-\alpha_{n})\nu_{n}\]

where $\delta_{a}$ is the Dirac mass at $a$. Since $\rho(\mu_{n},\nu_{n})\leq 2\alpha_{n}\underset{n\to+\infty}{\rightarrow}0$, the sequence $(\mu_{n})_{n}$ is dense in $\mathcal{M}(K)$.

Now put

\[\Omega_{N}(a)=\bigcup_{k\geq N}B(\mu_{k},r_{k})\quad\mbox{and}\quad\Omega(a)=\bigcap_{N=1}^{+\infty}\Omega_{N}.\]
$\Omega(a)$ is a  $G_{\delta}$ set in $\mathcal{M}(K)$ since for all $N$,  $\Omega_{N}(a)$ is a dense open set.

\medskip
Let $\mu\in \Omega$. There exists an increasing sequence $(m_{n})$ of integers such that for all $n$,
\[\rho(\mu,\mu_{m_{n}})\leq r_{m_{n}}.\]

 Since $0<c_{n}\leq d_{n}$, we can construct a Lipschitz function $f_{n}$ which satisfies $0\leq f_{n}(y)\leq c_{n}$ for all $y$ and $f_{n}(y)=c_{n}$ for all $y\in B(a,\frac{d_{n}}{2})$ and $f_{n}(y)=0$ for all $y\notin B(a,d_{n})$ and such that $Lip(f)\leq1$. (For example we can consider the restriction to $K$ of the function $f:\R^{d}\rightarrow \R$, $f(y)=c_{n}$ if $\left\|y-a\right\|\leq \frac{d_{n}}{2}$; $f(y)=-2\frac{c_{n}}{d_{n}}\left\|y-a\right\|+2c_{n}$ if $\frac{d_{n}}{2}<\left\|y-a\right\|\leq d_{n}$; $f(y)=0$ if $\left\|y-a\right\|>d_{n}$).

We have,
\begin{equation}
\int f_{m_{n}}d\mu\leq c_{m_{n}}\mu(B(a,d_{m_{n}})).
\label{eq1}
\end{equation}

In other part, using the property of the function $f$ we get for all $n$

\begin{eqnarray}
\int f_{m_{n}}d\mu_{m_{n}}&\geq &\alpha_{m_{n}}\int f_{m_{n}}d\delta_{a} \nonumber\\
&\geq&\alpha_{m_{n}}\int_{B(a,\frac{d_{m_{n}}}{2})} f_{m_{n}}d\delta_{a} \nonumber \\
&=&\alpha_{m_{n}}c_{m_{n}}.\label{eq2}
\end{eqnarray}

We have $\rho(\mu,\mu_{m_{n}})\leq r_{m_{n}}$, thus using (\ref{eq1}) and (\ref{eq2}), for all $n$

\begin{eqnarray}
c_{m_{n}}\mu(B(a,d_{m_{n}}))&\geq& \int f_{m_{n}}d\mu\nonumber\\
&\geq& \int f_{m_{n}}d\mu_{m_{n}}-\rho(\mu,\mu_{m_{n}})\nonumber\\
&\geq& \alpha_{m_{n}}c_{m_{n}}-r_{m_{n}}\label{eq3}
\end{eqnarray}
then by (\ref{eq3})
\begin{eqnarray*}
\mu(B(a,d_{m_{n}}))&\geq& \alpha_{m_{n}}-\frac{r_{m_{n}}}{c_{m_{n}}}\\
&=&d_{m_{n}}^{\beta_{m_{n}}}-d_{m_{n}}^{\frac{\theta}{2}s}\\
&=&d_{m_{n}}^{\beta_{m_{n}}}(1-d_{m_{n}}^{\frac{\theta}{2}s-\beta_{m_{n}}})
\end{eqnarray*}

Then for $n$ sufficiently large we get
\begin{equation*}
   \mu(B(a,d_{m_{n}}))\geq \frac{1}{2}d_{m_{n}}^{\beta_{m_{n}}}.
\end{equation*}

Finally

\[h_{\mu}(a)=\liminf_{r\to0}\frac{\log\mu(B(a,r))}{\log r}\leq \liminf_{n\to+\infty}\frac{\log\mu(B(a,d_{m_{n}}))}{\log d_{m_{n}}}\leq \lim_{n\to+\infty}\beta_{m_{n}}=0.\]
 This finish the proof of the first point.

\medskip
\noindent 2) Now we must construct a dense $G_{\delta}$ set of $\mathcal{M}(K)$ which is independent of $a\in A$.

Let $(\nu_{n})$ be a dense sequence in $\mathcal{M}(K)$. Let $\nu=\frac{1}{\gamma(K)}\gamma\llcorner_{K}$, (remark that since $\Theta^{s}_{*}(K,a,\lambda)>0$ for some point $a$ then $\lambda(K)>0$).

Let $(\alpha_{n})_{n}$ be a sequence decreasing to $0$. Denote by
 \[\mu_{n}=\alpha_{n}\nu+(1-\alpha_{n})\nu_{n}.\]
 Since $\rho(\mu_{n},\nu_{n})\leq 2\alpha_{n}\underset{n\to+\infty}{\rightarrow}0$, the sequence $(\mu_{n})_{n}$ is dense in $\mathcal{M}(K)$.

Let $\theta>1+\frac{2}{s}$. We consider the following sequences $d_{n}=\exp\left(-\frac{1}{\alpha_{n}}\right)$, $r_{n}=d_{n}^{\theta s}$ and $c_{n}=d_{n}^{\frac{\theta-1}{2}s}$. All the defined sequences are decreasing to $0$. Remark that $\alpha_{n}=d_{n}^{\beta_{n}}$ with $\lim_{n\to\infty}\beta_{n}=0$.

\medskip
Now we set
\[\Omega_{N}=\bigcup_{k\geq N}B(\mu_{k},r_{k})\quad\mbox{and}\quad\Omega=\bigcap_{N=1}^{+\infty}\Omega_{N}.\]
$\Omega$ is a $G_{\delta}$ set in $\mathcal{M}(K)$ since for all $N$,  $\Omega_{N}$ is a dense open set.

\medskip
Let $\mu\in \Omega$. There exists an increasing sequence $(m_{n})$ of integers such that for all $n$,
\[\rho(\mu,\mu_{m_{n}})\leq r_{m_{n}}.\]

Let $a\in A$.  By our hypothesis, there exist $c>0$ and $v>0$,  such that
\begin{equation}\label{eqPropertyP2}
\mbox{if  } 0<r<v,\quad  \nu(B(a,r))\geq cr^{s}.
\end{equation}
Let $f_{n}$ be the Lipschitz function as constructed in the first point of the theorem associated to the newer sequences $(c_{n})$ and $(d_{n})$.

We get for all $n$,
\begin{equation}
\int f_{m_{n}}d\mu\leq c_{m_{n}}\mu(B(a,d_{m_{n}})).
\label{eq1secondpoint}
\end{equation}

In other part, using the property of the function $f$ we get for all $n$ such that $d_{n}\leq v$,

\begin{eqnarray}
\int f_{m_{n}}d\mu_{m_{n}}&\geq &\alpha_{m_{n}}\int f_{m_{n}}d\nu \nonumber\\
&\geq&\alpha_{m_{n}}\int_{B(a,\frac{d_{m_{n}}}{2})} f_{m_{n}}d\nu \nonumber \\
&=&\alpha_{m_{n}}c_{m_{n}}\nu\left(B(a,\frac{d_{m_{n}}}{2})\right)\nonumber\\
&\geq&c'\alpha_{m_{n}}c_{m_{n}}d_{m_{n}}^{s}.\label{eq2secondpoint}
\end{eqnarray}

Then by (\ref{eq1secondpoint}) and (\ref{eq2secondpoint}) we get

\begin{eqnarray*}
\mu(B(a,d_{m_{n}}))&\geq& c'\alpha_{m_{n}}d_{m_{n}}^{s}-\frac{r_{m_{n}}}{c_{m_{n}}}\\
&=&c'd_{m_{n}}^{\beta_{m_{n}}+s}-d_{m_{n}}^{\frac{\theta+1}{2}s}\\
&=&d_{m_{n}}^{\beta_{m_{n}}+s}(c'-d_{m_{n}}^{\frac{\theta-1}{2}s-\beta_{m_{n}}})
\end{eqnarray*}

Then for $n$ sufficiently large we get
\begin{equation*}
   \mu(B(a,d_{m_{n}}))\geq \frac{c'}{2}d_{m_{n}}^{\beta_{m_{n}}+s}.
\end{equation*}

Finally

\[h_{\mu}(a)=\liminf_{r\to0}\frac{\log\mu(B(a,r))}{\log r}\leq \liminf_{n\to+\infty}\frac{\log\mu(B(a,d_{m_{n}}))}{\log d_{m_{n}}}\leq \lim_{n\to+\infty}s+\beta_{m_{n}}=s.\]
 This finish the proof of the second point.

\subsection{Proof of Theorem \ref{th2}}
For all $n\in\N$, we put $L_{n}=N_{2^{-n}}(K)$ and let $a_{1,n},\ldots, a_{L_{n},n}$ be $L_{n}$ points of $K$ such that for $i\neq j$
\[B(a_{i,n},2^{-n})\cap B(a_{j,n},2^{-n})=\emptyset.\]
Let $\alpha_{n}=2^{-\sqrt{n}}$. Let $(\nu_{n})$ be a dense sequence in $\mathcal{M}(K)$. We consider the probability measures
\[\Pi_{n}=L_{n}^{-1}\sum_{i=1}^{L_{n}}\delta_{a_{i,n}}\] and
\[\mu_{n}=\alpha_{n}\Pi_{n}+(1-\alpha_{n})\nu_{n}.\]
 Since $\rho(\mu_{n},\nu_{n})\leq 2\alpha_{n}\underset{n\to+\infty}{\rightarrow}0$, the sequence $(\mu_{n})_{n}$ is dense in $\mathcal{M}(K)$.

 Now put $r_{n}=2^{-(s+2)n}$. We set
\[\Omega_{N}=\bigcup_{k\geq N}B(\mu_{k},r_{k})\quad\mbox{and}\quad\Omega=\bigcap_{N=1}^{+\infty}\Omega_{N}.\]
$\Omega$ is a  $G_{\delta}$ set in $\mathcal{M}(K)$ since for all $N$,  $\Omega_{N}$ is a dense open set.

\medskip
Let $\mu\in \Omega$. There exists an increasing sequence $(m_{n})$ of integers such that for all $n$,
\[\rho(\mu,\mu_{m_{n}})\leq r_{m_{n}}.\]

Let $x\in K$. Since $L_{m_{n}}$ is the largest number of disjoint balls of radius $2^{-m_{n}}$ with centers in $K$ then there exists $i\in\left\{1,\ldots,L_{m_{n}}\right\}$ such that

\[B(x,2^{-m_{n}})\cap B(a_{i,m_{n}},2^{-m_{n}})\neq\emptyset.\]
Thus $a_{i,m_{n}}\in B(x,22^{-m_{n}})$.

\medskip
Let $f_{n}\in Lip(K)$ such that for all $y\in B(x,22^{-m_{n}})$,  $f_{n}(y)=2^{-m_{n}}$, for all $y\notin B(x,42^{-m_{n}})$, $f_{n}(y)=0$ and $0\leq f_{n}\leq 2^{-m_{n}}$.

We get for all $n$,
\begin{equation}
\int f_{n}d\mu\leq 2^{-m_{n}}\mu(B(x,42^{-m_{n}})).
\label{eq1th2}
\end{equation}

In other part, using the property of the function $f_{n}$ we get for all $n$,

\begin{eqnarray}
\int f_{n}d\mu_{m_{n}}&\geq &\alpha_{m_{n}}\int f_{n}d\Pi_{m_{n}} \nonumber\\
&\geq&\alpha_{m_{n}}\int_{B(x,22^{-m_{n}})} f_{n}d\Pi_{m_{n}}\nonumber\\
&\geq&\alpha_{m_{n}}L_{m_{n}}^{-1}\int_{B(x,22^{-m_{n}})} f_{n}d\delta_{a_{i,m_{n}}} \nonumber \\
&=&\alpha_{m_{n}}2^{-m_{n}}L_{m_{n}}^{-1}.\nonumber
\end{eqnarray}

\medskip
Let $t$ such that $\overline{\mbox{dim}_{B}}K=s<t<s+1$. Then there exists $v>0$ such that for all $0<r<v$,
\[N_{r}(K)<r^{-t}.\]
Thus, for $n$ sufficiently large such that $2^{-m_{n}}<v$ we get $L_{m_{n}}^{-1}>2^{-tm_{n}}$. Then
\begin{equation}
\int f_{n}d\mu_{m_{n}}\geq \alpha_{m_{n}}2^{-m_{n}}2^{-tm_{n}}.\label{eq2th2}
\end{equation}

We have $\rho(\mu,\mu_{m_{n}})\leq r_{m_{n}}$, thus using (\ref{eq1th2}) and (\ref{eq2th2}), we get for $n$ sufficiently large

\begin{eqnarray}
2^{-m_{n}}\mu(B(x,42^{-m_{n}}))&\geq& \int f_{n}d\mu\nonumber\\
&\geq& \int f_{n}d\mu_{m_{n}}-\rho(\mu,\mu_{m_{n}})\nonumber\\
&\geq& \alpha_{m_{n}}2^{-m_{n}}2^{-tm_{n}}-r_{m_{n}}\label{eq3th2}
\end{eqnarray}
then by (\ref{eq3th2})
\begin{eqnarray*}
\mu(B(x,42^{-m_{n}}))&\geq& \alpha_{m_{n}}2^{-tm_{n}}-2^{m_{n}}r_{m_{n}}\\
&=&2^{-tm_{n}(1+\frac{1}{t\sqrt{m_{n}}})}-2^{-(s+1)m_{n}}\\
&=&2^{-tm_{n}(1+\frac{1}{t\sqrt{m_{n}}})}(1-2^{(t-(s+1))m_{n}+\sqrt{m_{n}}})
\end{eqnarray*}
$\lim_{n\to+\infty}(t-(s+1))m_{n}+\sqrt{m_{n}}=-\infty$, then for $n$ sufficiently large we get
\begin{equation*}
   \mu(B(x,42^{-m_{n}}))\geq \frac{1}{2}2^{-tm_{n}(1+\frac{1}{t\sqrt{m_{n}}})}.
\end{equation*}

Finally
\begin{eqnarray*}
h_{\mu}(x)&=&\liminf_{r\to0}\frac{\log\mu(B(x,r))}{\log r}\leq \liminf_{n\to+\infty}\frac{\log\mu(B(x,42^{-m_{n}}))}{\log42^{-m_{n}} }\\
&=&\liminf_{n\to+\infty}\frac{\log\mu(B(x,42^{-m_{n}}))}{\log2^{-m_{n}} }\\
&\leq& \lim_{n\to+\infty}t+\frac{1}{\sqrt{m_{n}}}=t.
\end{eqnarray*}
Then, for all $t$ such that $s=\overline{\mbox{dim}_{B}}K<t<s+1$, we have $h_{\mu}(x)\leq t$. Thus $h_{\mu}(x)\leq s$.

We conclude that for all $\mu\in \Omega$, for all $x\in K$, $h_{\mu}(x)\leq s$.

\section{The mulitifractal spectrum of typical measures on self-similar sets }
In this section we focus on the special case where the compact $K$ is a self-similar set. We recall the definition of such set and some related metric facts that will be useful for our purpose.
\subsection{Recalls on self-similar sets}
We refer the reader to \cite{Hut}, \cite{Kus}, \cite{Mos} for more properties of self-similar sets.

\medskip
By $\R^{d}$, $d\geq 1$, we denote the $d-$dimensional Euclidean space, by $\mathcal{B}(x,r)$ the balls $\left\{y:\;\left|x-y\right|<r\right\}$, $x\in\R^{d}$, $r>0$, $\left|\;\right|$ the canonical Euclidean norm.
Let $\mathbf{S}=\left\{S_{1},\cdots,S_{p}\right\}$ be a given set of contractive similitudes, that is
\begin{equation}
\left|S_{i}(x)-S_{i}(y)\right|=\alpha_{i}\left|x-y\right|
\end{equation}

$i=1,\ldots,p$, where we assume that $0<\alpha_{1}\leq\cdots\leq \alpha_{p}<1$. We will call briefly $S_{i}$ an $\alpha_{i}-$similitude.

We use the classical following notations:
For $n\in\mathbb{N}^{*}$ we note $\mathbb{A}_n=\{\mathbf{i}=i_{1}\cdots
i_{n}: \ \forall \, k\in\{1,...,n\}, \ i_{k}\in\{1,...,p\}\}$ the sets of words of length $n$ in the alphabet $\{1,...,p\}$.
$\mathbb{A}^{*}=\bigcup_{n\geq 1} \mathbb{A}_n$. Finally
$\mathbb{A}=\{\mathbf{i}=i_{1}\cdots
i_{k}\cdots: \;i_{k}\in\{1,...,p\}\}$  the set of  infinite words.

Without  confusion we note in bold characters the
elements of $\mathbb{A}$ and $\mathbb{A}^{*}$. For
$\mathbf{i}=i_{1}\cdots i_{n}\in\mathbb{A}_n$, we set
$|\mathbf{i}|=n$ the length of  $\mathbf{i}$.  For
$\mathbf{i} \in\mathbb{A}$ and $n\geq 1$, we note
$\mathbf{i}[n]=i_{1}\cdots i_{n}$.

\medskip
If $\mathbf{i}=i_{1}\cdots i_{n}\in\mathbb{A}_n$,  then $S_{{\bf i}}:= S_{i_{1}}\circ\cdots\circ
S_{i_{n}}$  is a contraction with ratio $\alpha_\bi=\alpha_{i_1}\cdot\cdot\cdot \alpha_{i_n}$. If $T$ is any subset of $\R^{d}$, then $T_{\mathbf{i}}=S_{\mathbf{i}}(T)$ with the convention $T_{\emptyset}=T$ and $S_{\emptyset}=$Id. Particularly, for $\mathbf{i}\in\mathbb{A}_n$  $K_{\mathbf{i}}$ is called an $n-$complex.

\medskip
We say that the self similar $K$ satisfy the open set condition, if there exists an open set $U$ such that for all $i\in\left\{1,\cdots,p\right\}$,

\[S_{i}(U)\subset U \quad \mbox{and}\quad S_{i}(U)\cap S_{j}(U)=\emptyset\; \mbox{ if }i\neq j.\]

\medskip
Let $R>0$. We set
\[I(R)=\left\{\mathbf{i}=i_{1}\cdots i_{n}\in \mathbb{A}^{*}\;:\;\alpha_{\mathbf{i}}\leq R<\alpha_{\mathbf{i}[n-1]} \right\}.\]
Let $s$ the real defined by
\[\sum_{k=1}^{p}\alpha_{k}^{s}=1.\]
Under the open set condition we have  $\dimh(K)=s$ and $0<\mathcal{H}^{s}(K)<+\infty$  where $\mathcal{H}^{s}$ is the $s-$Hausdorff measure (see for example \cite{Fal1}). We set
\[\lambda=\frac{1}{\mathcal{H}(K)}\mathcal{H}^{s}\llcorner K.\]

\medskip
In the sequel we denote by $\sharp A$ the cardinality of the set $A$.

\medskip
We gather the useful properties for us in the following proposition (see \cite{Mos}, \cite{Quef}. Some results are also in the proof of the Theorem 9.3 in \cite{Fal1}).
\begin{prop}
We assume that the open set condition is satisfied with the open set $U$. Let $s=\dimh(K)$.
\begin{enumerate}
\item There exist two constants $c_{1},c_{2}>0$, such that for all $R>0$,
\begin{equation*}\label{cardinalIR}
c_{1}R^{-s}\leq \sharp I(R)\leq c_{2}R^{-s}.
\end{equation*}
\item For all $R>0$,
\[\sum_{\mathbf{i}\in I(R)}\alpha_{\mathbf{i}}=1.\]
\item There exist two constants $c_{1},c_{2}>0$ such that for all $R>0$, for all $x\in K$
\begin{equation*}\label{propLambda1}
c_{1}R^{s}\leq \lambda\left(B(x,R)\right)\leq c_{2}R^{s}.
\end{equation*}
\item Let $R>0$.  
\begin{enumerate}
\item $K=\bigcup_{\mathbf{i}\in I(R)}K_{\mathbf{i}}$.
\item For all $\mathbf{i}, \mathbf{j}\in I(R)$ such that $\mathbf{i}\neq \mathbf{j} $, we have
\begin{equation*}\label{propLambda2}
U_{\mathbf{i}}\cap U_{\mathbf{j}}=\emptyset\quad\mbox{and}\quad \lambda\left(K_{\mathbf{i}}\cap K_{\mathbf{j}}\right)=0.
\end{equation*}
\end{enumerate}
\end{enumerate}
\label{propositionSelfsimilarset}
\end{prop}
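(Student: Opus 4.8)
I would organize the proof around item (4), from which items (1) and (2) fall out together with the scaling behaviour of $\mathcal{H}^{s}$, leaving the two-sided density estimate (3) -- whose upper half is the only genuinely non-formal part -- for last.

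For item (4) the combinatorial core is that, for $0<R<1$ (recall the convention $\alpha_{\emptyset}=1$), the set $I(R)$ is a finite antichain which is moreover a ``section'' of the word tree: for every infinite word $\mathbf{j}\in\mathbb{A}$ the sequence $(\alpha_{\mathbf{j}[n]})_{n\geq0}$ is strictly decreasing from $1$ to $0$ (each $\alpha_{k}<1$ and $\alpha_{\mathbf{j}[n]}\leq\alpha_{p}^{\,n}$), so there is exactly one $n$ with $\alpha_{\mathbf{j}[n]}\leq R<\alpha_{\mathbf{j}[n-1]}$; thus $\mathbf{j}$ has a unique prefix in $I(R)$, no element of $I(R)$ is a prefix of another, and every $\mathbf{i}\in I(R)$ has length $<1+\log R/\log\alpha_{p}$, so $I(R)$ is finite. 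Writing $\pi:\mathbb{A}\to K$ for the coding map (so $K=\pi(\mathbb{A})$ and $\pi(\mathbf{j})=S_{\mathbf{i}}(\pi(\sigma^{|\mathbf{i}|}\mathbf{j}))$ whenever $\mathbf{i}$ is a prefix of $\mathbf{j}$), item (4a) is immediate: $K_{\mathbf{i}}=S_{\mathbf{i}}(K)\subseteq K$ gives one inclusion, and for $x=\pi(\mathbf{j})\in K$ the unique prefix $\mathbf{i}\in I(R)$ of $\mathbf{j}$ puts $x\in K_{\mathbf{i}}$, giving the other. For item (4b), given distinct $\mathbf{i},\mathbf{j}\in I(R)$ with longest common prefix $\mathbf{w}$, say $\mathbf{i}=\mathbf{w}a\mathbf{i}'$ and $\mathbf{j}=\mathbf{w}b\mathbf{j}'$ with $a\neq b$, the invariance $S_{k}(U)\subseteq U$ yields $U_{\mathbf{i}}\subseteq S_{\mathbf{w}}S_{a}(U)$ and $U_{\mathbf{j}}\subseteq S_{\mathbf{w}}S_{b}(U)$, while $S_{a}(U)\cap S_{b}(U)=\emptyset$ and injectivity of $S_{\mathbf{w}}$ force $U_{\mathbf{i}}\cap U_{\mathbf{j}}=\emptyset$; for the $\lambda$-part I would use that the subadditivity $\mathcal{H}^{s}(K)\leq\sum_{\mathbf{w}\in\mathbb{A}_{n}}\mathcal{H}^{s}(K_{\mathbf{w}})$ is an equality, since the right-hand side equals $\mathcal{H}^{s}(K)\big(\sum_{k}\alpha_{k}^{s}\big)^{n}=\mathcal{H}^{s}(K)$ and $0<\mathcal{H}^{s}(K)<\infty$, whence $\mathcal{H}^{s}(K_{\mathbf{w}}\cap K_{\mathbf{w}'})=0$ for distinct words of length $n$; extending two non-prefix words to a common length then gives $\mathcal{H}^{s}(K_{\mathbf{i}}\cap K_{\mathbf{j}})=0$, hence $\lambda(K_{\mathbf{i}}\cap K_{\mathbf{j}})=0$.

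Items (1) and (2) follow quickly. Since $\mathcal{H}^{s}(S_{\mathbf{i}}(A))=\alpha_{\mathbf{i}}^{s}\mathcal{H}^{s}(A)$, we have $\lambda(K_{\mathbf{i}})=\alpha_{\mathbf{i}}^{s}$, so (4a), (4b) and the finiteness of $I(R)$ give $1=\lambda(K)=\sum_{\mathbf{i}\in I(R)}\lambda(K_{\mathbf{i}})=\sum_{\mathbf{i}\in I(R)}\alpha_{\mathbf{i}}^{s}$, which is item (2) (with the exponent $s$). Next, writing $\mathbf{i}=i_{1}\cdots i_{n}$, the defining inequalities of $I(R)$ give $\alpha_{\mathbf{i}}=\alpha_{\mathbf{i}[n-1]}\alpha_{i_{n}}$ with $\alpha_{\mathbf{i}[n-1]}>R$ and $\alpha_{i_{n}}\geq\alpha_{1}$, hence $\alpha_{1}R<\alpha_{\mathbf{i}}\leq R$; inserting $(\alpha_{1}R)^{s}<\alpha_{\mathbf{i}}^{s}\leq R^{s}$ into the identity of item (2) gives $R^{-s}\leq\sharp I(R)\leq\alpha_{1}^{-s}R^{-s}$, which is item (1).

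The remaining point, item (3), is where I expect the real work. The lower bound is easy: by (4a) we have $x\in K_{\mathbf{i}}$ for some $\mathbf{i}\in I(R)$, and $\mathrm{diam}(K_{\mathbf{i}})=\alpha_{\mathbf{i}}\,\mathrm{diam}(K)\leq R\,\mathrm{diam}(K)$, so $K_{\mathbf{i}}\subseteq\overline{B}(x,R\,\mathrm{diam}(K))$ and $\lambda(\overline{B}(x,R\,\mathrm{diam}(K)))\geq\lambda(K_{\mathbf{i}})=\alpha_{\mathbf{i}}^{s}>(\alpha_{1}R)^{s}$, which after rescaling the radius and adjusting the constant is the lower bound of (3). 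For the upper bound I would invoke the classical bounded-overlap lemma, and this is the main obstacle: after the standard reduction to a bounded open set $U$ with $K\subseteq\overline{U}$ (the inclusion being automatic once $U$ is bounded, since $\bigcup_{\mathbf{i}\in\mathbb{A}_{n}}S_{\mathbf{i}}(\overline{U})$ decreases to $K$), fix once and for all a ball $B(y_{0},r_{0})\subseteq U$. For each $\mathbf{i}\in I(R)$ the ball $S_{\mathbf{i}}(B(y_{0},r_{0}))\subseteq U_{\mathbf{i}}$ has radius $\alpha_{\mathbf{i}}r_{0}>\alpha_{1}r_{0}R$; the sets $U_{\mathbf{i}}$, $\mathbf{i}\in I(R)$, are pairwise disjoint by (4b); and any $\mathbf{i}\in I(R)$ with $K_{\mathbf{i}}\cap B(x,R)\neq\emptyset$ has $\overline{U_{\mathbf{i}}}=S_{\mathbf{i}}(\overline{U})\supseteq K_{\mathbf{i}}$ and diameter at most $R\,\mathrm{diam}(U)$, hence $\overline{U_{\mathbf{i}}}\subseteq B(x,(1+\mathrm{diam}(U))R)$. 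Comparing the $d$-dimensional Lebesgue volumes of these pairwise disjoint inner balls with that of the single ball $B(x,(1+\mathrm{diam}(U))R)$ bounds the number of such $\mathbf{i}$ by a constant $N$ depending only on $\mathbf{S}$ and $U$. Since $\lambda$ is carried by $K=\bigcup_{\mathbf{i}\in I(R)}K_{\mathbf{i}}$, only those $\mathbf{i}$ contribute to $\lambda(B(x,R))$, so $\lambda(B(x,R))\leq\sum_{\mathbf{i}:\,K_{\mathbf{i}}\cap B(x,R)\neq\emptyset}\lambda(K_{\mathbf{i}})\leq N R^{s}$. The only genuinely non-routine ingredients are this volume count and the reduction to a bounded $U$ with $K\subseteq\overline{U}$; everything else is bookkeeping built on (4a)--(4b) and the scaling law for $\mathcal{H}^{s}$.
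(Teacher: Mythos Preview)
The paper does not prove this proposition; it merely records it with references to \cite{Mos}, \cite{Quef}, and the proof of Theorem~9.3 in \cite{Fal1}. Your argument is correct and is essentially the classical one (Hutchinson/Falconer): establish the section property of $I(R)$ and the disjointness of the $U_{\mathbf{i}}$ first, deduce (1) and (2) from the scaling law $\lambda(K_{\mathbf{i}})=\alpha_{\mathbf{i}}^{s}$, and obtain the upper density in (3) by the Lebesgue volume comparison of the disjoint inner balls $S_{\mathbf{i}}(B(y_{0},r_{0}))\subset U_{\mathbf{i}}$ against a single large ball. Your reduction to a bounded $U$ is legitimate: if $U$ satisfies the open set condition and $B$ is any open ball with $S_{i}(\overline{B})\subset B$ for all $i$ (which exists since the $S_{i}$ are contractions), then $U\cap B$ is a bounded open set still satisfying the OSC, and $K\subset\overline{U\cap B}$ follows from invariance of $\overline{U\cap B}$.

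One remark: you correctly identified (and silently fixed) the typo in item~(2) of the statement---the identity should read $\sum_{\mathbf{i}\in I(R)}\alpha_{\mathbf{i}}^{s}=1$, not $\sum_{\mathbf{i}\in I(R)}\alpha_{\mathbf{i}}=1$. The paper itself uses the correct version with exponent $s$ later when defining $\lambda_{n}$, so this is indeed just a misprint in the proposition.
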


\subsection{Proof of Theorem \ref{maintheorem}}
Let $K$ be a self-similar set associated to the system $\mathbf{S}=\left\{S_{1},\cdots,S_{p}\right\}$ of $\alpha_{i}-$simiitudes satisfying the open set condition, where we assume that $0<\alpha_{1}\leq\cdots\leq \alpha_{p}<1$. We adopt the notations of the previous section. Denote by $s=\dimh K$.

\medskip
Since $s=\overline{\mbox{dim}_{B}}K$, then by Theorem \ref{th2}, we know that there exists a $G_{\delta}$ set $\Omega'$ of $\mathcal{M}(K)$ such that for all $\mu\in\Lambda$, for all $x\in K$, $h_{\mu}(x)\leq s$.

To achieve the proof of the Theorem \ref{maintheorem}, we will prove that there exists a $G_{\delta}$ set $\Omega''$ of $\mathcal{M}(K)$ such that for all $\mu\in\Omega''$, for all $h\in]0,s]$, $d_{\mu}(h)=h$. Then to recover $h=0$, we fix any point $x_{0}\in K$ and we consider the $G_{\delta}$ set $\Omega(x_{0})$ associated to $x_{0}$ in Theorem \ref{th1}. We consider finally $\Omega=\Omega'\cap \Omega''\cap \Omega(x_{0})$  which stills a $G_{\delta}$ set of $\mathcal{M}(K)$.

\medskip
\begin{rem}
In our proofs many constants will appear with no importance. To relieve the work, we will sometimes denote the constants by the same letter between consecutive inequalities even if the constants are different.
\end{rem}

\medskip
We adopt the same approach  of \cite{BucSeu1} with suitable modifications.

\medskip
For any $\mathbf{i}\in I(2^{-J_{N}})$ we pick an $x_{\mathbf{i}}\in K_{\mathbf{i}}$. The family of point $\bigcup_{N}\left\{x_{\mathbf{i}}:\; \mathbf{i}\in I(2^{-J_{N}})\right\}$ will be fixed in the rest of the paper.

\medskip
We define the following probability measure
\[\lambda_{n}=\sum_{\mathbf{i}\in I(2^{-J_{n}}) }\alpha_{\mathbf{i}}^{s}\delta_{x_{\mathbf{i}}}\]
where $\delta_{x_{\mathbf{i}}}$ is the Dirac mass at the point $x_{\mathbf{i}}$.  $\lambda_{n}$ is probability measure since $\sum_{\mathbf{i}\in I(2^{-J_{n}}) }\alpha_{\mathbf{i}}^{s}=1$ (see Proposition \ref{propositionSelfsimilarset}), and is supported by $K$.

\medskip
Let $\beta_{n}=\frac{J_{n}}{n}$ and we denote by
\[\mu_{n}=\beta_{n}\lambda_{n}+(1-\beta_{n})\nu_{n}\]
the sequence $(\mu_{n})_{n}$ is dense sequence in $\mathcal{M}(K)$ since $\varrho(\mu_{n},\nu_{n})\leq 2\beta_{n}$.

\begin{definition}
 Let $n\in\N^{*}$. We introduce
 \[\Omega_{n}=\bigcup_{k\geq n}B(\mu_{k},2^{-sJ_{k}^{2}}) \quad\mbox{and}\quad \Omega''=\bigcap_{n\geq1}\Omega_{n}\]
\end{definition}
$\Omega''$ is a  $G_{\delta}$ set of $\mathcal{M}(K)$.

\medskip
Let $\mu\in\Omega''$ be fixed. There exists a sequence $(J_{N_{p}})_{p\geq1}$ such that for all $p$, \[\varrho(\mu,\mu_{N_{p}})<2^{-sJ_{N_{p}}^{2}}.\]

\begin{definition}
Let $\theta\geq1$. Let us introduce the set of points
\[\Lambda_{\theta,p}=\bigcup_{\mathbf{i}\in I(2^{-J_{N_{p}}})}\overline{B}(x_{\mathbf{i}},2^{-\theta J_{N_{p}}})\cap K.\]
and then let us define
\[\Lambda_{\theta}=\bigcap_{P\geq 1}\bigcup_{p\geq P}\Lambda_{\theta,p}.\]

\end{definition}

\begin{lem}
Let $\epsilon>0$. There exist $p_{\epsilon}$ and $c>0$, such that for all $p\geq p_{\epsilon}$ and for all $x\in\Lambda_{\theta,p}$,
\begin{equation}{\label{majholdmu}}
\mu(B(x,2 2^{-\theta J_{N_{p}}}))\geq c 2^{-s(1+\epsilon)J_{N_{p}}}.
\end{equation}

\end{lem}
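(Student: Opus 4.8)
The plan is to estimate $\mu(B(x, 2\cdot 2^{-\theta J_{N_p}}))$ from below by testing $\mu$ against a suitable Lipschitz function, exactly in the spirit of the earlier proofs in Section 2. Fix $x \in \Lambda_{\theta, p}$, so $x$ lies in $\overline{B}(x_{\mathbf{i}}, 2^{-\theta J_{N_p}})$ for some $\mathbf{i} \in I(2^{-J_{N_p}})$; in particular $x_{\mathbf{i}} \in B(x, 2\cdot 2^{-\theta J_{N_p}})$. Set $d = 2^{-\theta J_{N_p}}$ and $c' = d$ (mimicking the $c_n \le d_n$ role), and construct $f \in \mathrm{Lip}(K)$ with $0 \le f \le d$, $f \equiv d$ on $B(x, 2d)$, $f \equiv 0$ outside $B(x, 4d)$. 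Then on the one hand $\int f \, d\mu \le d\, \mu(B(x, 4d)) \le d\, \mu(B(x, 2\cdot 2^{-\theta J_{N_p}}))$ — wait, I need the radius bookkeeping to match the statement, so I would instead take $f \equiv d$ on $B(x, \tfrac{3}{2}\cdot 2^{-\theta J_{N_p}})$ and $f\equiv 0$ outside $B(x, 2\cdot 2^{-\theta J_{N_p}})$, giving $\int f \, d\mu \le 2^{-\theta J_{N_p}} \mu(B(x, 2\cdot 2^{-\theta J_{N_p}}))$.

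On the other hand, bound $\int f \, d\mu_{N_p}$ from below using only the $\lambda_{N_p}$ part: $\int f\, d\mu_{N_p} \ge \beta_{N_p} \int f\, d\lambda_{N_p} \ge \beta_{N_p} \cdot f(x_{\mathbf{i}}) \cdot \alpha_{\mathbf{i}}^{s} = \beta_{N_p} \cdot 2^{-\theta J_{N_p}} \cdot \alpha_{\mathbf{i}}^{s}$, since $x_{\mathbf{i}}$ is an atom of $\lambda_{N_p}$ with mass $\alpha_{\mathbf{i}}^{s}$ and $x_{\mathbf{i}} \in B(x, \tfrac{3}{2}\cdot 2^{-\theta J_{N_p}})$. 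Because $\mathbf{i} \in I(2^{-J_{N_p}})$ we have $\alpha_{\mathbf{i}} \le 2^{-J_{N_p}} < \alpha_{\mathbf{i}[n-1]}$, and using $\alpha_{\mathbf{i}} \ge \alpha_1 \alpha_{\mathbf{i}[n-1]} > \alpha_1 2^{-J_{N_p}}$ (here $\alpha_1 = \min_i \alpha_i$) we get $\alpha_{\mathbf{i}}^{s} \ge \alpha_1^{s} 2^{-s J_{N_p}}$, a constant times $2^{-s J_{N_p}}$. Recalling $\beta_{N_p} = J_{N_p}/N_p$, combine these via $\varrho(\mu, \mu_{N_p}) < 2^{-s J_{N_p}^2}$:
\[
2^{-\theta J_{N_p}} \mu(B(x, 2\cdot 2^{-\theta J_{N_p}})) \ge \int f \, d\mu \ge \int f\, d\mu_{N_p} - \varrho(\mu,\mu_{N_p}) \ge c\, \tfrac{J_{N_p}}{N_p} 2^{-\theta J_{N_p}} 2^{-s J_{N_p}} - 2^{-s J_{N_p}^2},
\]
so that $\mu(B(x, 2\cdot 2^{-\theta J_{N_p}})) \ge c\, \tfrac{J_{N_p}}{N_p} 2^{-s J_{N_p}} - 2^{-s J_{N_p}^2 + \theta J_{N_p}}$.

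Finally I factor out $2^{-s J_{N_p}}$: the right side equals $2^{-s J_{N_p}}\bigl(c\, \tfrac{J_{N_p}}{N_p} - 2^{-s J_{N_p}^2 + (\theta + s) J_{N_p}}\bigr)$. Since $-s J_{N_p}^2 + (\theta + s) J_{N_p} \to -\infty$ and $J_{N_p}/N_p$ — here I must invoke the as-yet-unstated growth hypothesis on the sequence $(J_n)$ (the paper evidently chooses $J_n$ with $J_n \to \infty$ and $J_n/n \to 0$ slowly, so that $\tfrac{J_{N_p}}{N_p} \ge 2^{-\epsilon J_{N_p}}$ for $p$ large, given $\epsilon$) — the bracket is $\ge c\, 2^{-\epsilon J_{N_p}}$ for $p \ge p_\epsilon$, yielding $\mu(B(x, 2\cdot 2^{-\theta J_{N_p}})) \ge c\, 2^{-s(1 + \epsilon) J_{N_p}}$ after absorbing the factor $2^{-\epsilon J_{N_p}} \cdot 2^{-s J_{N_p}}$ up to replacing $\epsilon$ by $\epsilon/s$ or similar (harmless, since $\epsilon$ is arbitrary). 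The main obstacle is purely bookkeeping: matching the radii in the Lipschitz bump to the exact constant $2$ in the statement, and pinning down the precise growth condition on $(J_n)$ that makes $\tfrac{J_{N_p}}{N_p}$ dominate $2^{-\epsilon J_{N_p}}$ — everything else is a routine repetition of the test-function argument used three times already in Section 2.
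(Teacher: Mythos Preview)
Your approach is essentially identical to the paper's: pick the atom $x_{\mathbf{i}}$ near $x$, test against a Lipschitz bump, use the $\beta_{N_p}\lambda_{N_p}$ part of $\mu_{N_p}$, the bound $\alpha_{\mathbf{i}}^s\ge \alpha_1^s 2^{-sJ_{N_p}}$, and the gap $\varrho(\mu,\mu_{N_p})<2^{-sJ_{N_p}^2}$ to absorb the error term. Two small points of comparison: (i) the paper takes the plateau of $f_{\theta,p}$ on $\overline{B}(x,2^{-\theta J_{N_p}})$ and the support in $\overline{B}(x,2\cdot 2^{-\theta J_{N_p}})$, so the transition width equals the height and $\mathrm{Lip}(f_{\theta,p})\le 1$ exactly; your choice of plateau radius $\tfrac32\cdot 2^{-\theta J_{N_p}}$ forces $\mathrm{Lip}=2$, a harmless rescaling but worth fixing. (ii) Regarding the ``as-yet-unstated growth hypothesis'' you flag: the paper's own computation writes $\beta_{N_p}2^{-sJ_{N_p}}=c_1\,2^{-s(1+\frac{1}{sN_p})J_{N_p}}$, which amounts to treating $\beta_{N_p}$ as $2^{-J_{N_p}/N_p}$ rather than $J_{N_p}/N_p$; under that reading the required inequality $\beta_{N_p}\ge c\,2^{-s\epsilon J_{N_p}}$ reduces simply to $\tfrac{1}{sN_p}<\epsilon$, i.e.\ $p$ large, with no further assumption on $(J_n)$ needed. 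So the ``obstacle'' you identify is really just a typo in the definition of $\beta_n$, and once that is read correctly your argument and the paper's coincide line for line.
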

\begin{proof}
Let $\epsilon>0$  and  $x\in \Lambda_{\theta,p}$. Then there exists $\mathbf{i}\in I(2^{-J_{N_{p}}})$ such that $x_{\mathbf{i}}\in \overline{B}(x,2^{-\theta J_{N_{p}}})$. Thus
\[\mu_{N_{p}}(\overline{B}(x,2^{-\theta J_{N_{p}}}))\geq \beta_{N_{p}}\alpha_{\mathbf{i}}^{s}\geq c_{1}\beta_{N_{p}}2^{-sJ_{N_{p}}}=c_{1}2^{-s(1+\frac{1}{sN_{p}})J_{N_{p}}}\geq c_{1}2^{-s(1+\epsilon)J_{N_{p}}}\]
for $p$ so large that $\frac{1}{sN_{p}}< \epsilon$.

Let $f_{\theta,p}$ be a lipschitz function on $K$ with $f\in \mbox{Lip}(K)$ such that for all $z\in\overline{B}(x,2^{-\theta J_{N_{p}}})$, $f_{\theta,p}(z)=2^{-\theta J_{N_{p}}}$, for all $z\notin \overline{B}(x,22^{-\theta J_{N_{p}}})$,  $f_{\theta,p}(z)=0$, and $0\leq f_{\theta,p}\leq 2^{-\theta J_{N_{p}}}$. By construction,
\[\int f_{\theta,p}d\mu\leq 2^{-\theta J_{N_{p}}}\mu(B(x,22^{-\theta J_{N_{p}}}))\]
and

\[\int f_{\theta,p}d\mu_{N_{p}}\geq c_{1}2^{-\theta J_{N_{p}}}2^{-s(1+\epsilon)J_{N_{p}}}\]
thus
\begin{eqnarray*}
2^{-\theta J_{N_{p}}}\mu(B(x,22^{-\theta J_{N_{p}}}))&\geq&\int f_{\theta,p}d\mu\\
&\geq&\int f_{\theta,p}d\mu_{N_{p}}-\varrho(\mu,\mu_{N_{p}})\\
&\geq& c_{1}2^{-\theta J_{N_{p}}}2^{-s(1+\epsilon)J_{N_{p}}}-2^{-sJ_{N_{p}}^{2}}
\end{eqnarray*}
when $p$ is sufficiently large
\[2^{-sJ_{N_{p}}^{2}}\leq \frac{1}{2}c_{1}2^{-\theta J_{N_{p}}}2^{-s(1+\epsilon)J_{N_{p}}}\]
thus there exists $p_{\epsilon}$ such that for all $p\geq p_{\epsilon}$,
\[\mu(B(x,22^{-\theta J_{N_{p}}}))\geq \frac{1}{2}c_{1}2^{-s(1+\epsilon)J_{N_{p}}}.\]

\end{proof}
\begin{prop}\label{propDimHausdorffLambdaTheta}
Let $\theta\geq 1$ and $x\in \Lambda_{\theta}$. Then $h_{\mu}(x)\leq \frac{s}{\theta}$.
\end{prop}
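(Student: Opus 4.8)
The plan is to show directly from the definition of $h_\mu$ that the lower local exponent at any $x\in\Lambda_\theta$ is at most $s/\theta$, by exhibiting a sequence of radii along which $\mu$ of the corresponding ball is not too small. Fix $x\in\Lambda_\theta$. By the definition $\Lambda_\theta=\bigcap_{P\ge1}\bigcup_{p\ge P}\Lambda_{\theta,p}$, there is an increasing sequence of indices $p_k\to\infty$ with $x\in\Lambda_{\theta,p_k}$ for every $k$. Along this sequence we may apply the Lemma: for any fixed $\epsilon>0$ and all $k$ large enough (so that $p_k\ge p_\epsilon$),
\[
\mu\bigl(B(x,2\cdot 2^{-\theta J_{N_{p_k}}})\bigr)\ge c\,2^{-s(1+\epsilon)J_{N_{p_k}}}.
\]

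Next I would plug these radii $r_k:=2\cdot 2^{-\theta J_{N_{p_k}}}$ into the $\liminf$ defining $h_\mu(x)$. Since $r_k\to0$ (because $J_{N_{p_k}}\to\infty$), we have
\[
h_\mu(x)=\liminf_{r\to0}\frac{\log\mu(B(x,r))}{\log r}\le\liminf_{k\to\infty}\frac{\log\mu(B(x,r_k))}{\log r_k}\le\liminf_{k\to\infty}\frac{\log c - s(1+\epsilon)J_{N_{p_k}}\log 2}{\log 2-\theta J_{N_{p_k}}\log 2}.
\]
As $k\to\infty$ the constant term $\log c$ and the constant $\log 2$ in the denominator become negligible against the terms growing like $J_{N_{p_k}}$, so the right-hand side tends to $\frac{s(1+\epsilon)}{\theta}$. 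Hence $h_\mu(x)\le\frac{s(1+\epsilon)}{\theta}$.

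Finally, since $\epsilon>0$ was arbitrary, letting $\epsilon\downarrow0$ gives $h_\mu(x)\le\frac{s}{\theta}$, which is the claim. The only point requiring a little care is the interchange of the two limits: one must check that the estimate from the Lemma holds for \emph{all} sufficiently large $k$ with the \emph{same} constant $c$ (which it does, $c=\tfrac12 c_1$ being independent of $p$ and $x$), so that the $\liminf$ over the subsequence of radii is genuinely bounded by $\frac{s(1+\epsilon)}{\theta}$, and then that this holds for every $\epsilon$. I do not expect any real obstacle here — the substance of the argument is entirely contained in the preceding Lemma, and this Proposition is a short extraction of the local-exponent consequence.
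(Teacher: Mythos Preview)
Your proof is correct and follows essentially the same approach as the paper: both use the preceding Lemma to obtain a lower bound on $\mu(B(x,r))$ along a subsequence of radii determined by the $J_{N_{p_k}}$, deduce $h_\mu(x)\le \frac{s}{\theta}(1+\epsilon)$, and let $\epsilon\to 0$. Your version is simply more explicit about the computation of the $\liminf$.
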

\begin{proof}
If $x\in \Lambda_{\theta}$, then (\ref{majholdmu}) is satisfied for infinite number of integer $p$. Hence, for all $\epsilon>0$, there is a sequence of infinite real numbers $(r_{p})$ decreasing to $0$ such that for all $p$
\[\mu(B(x,2 r_{p}))\geq c r_{p}^{\frac{s}{\theta}(1+\epsilon)}\]
this implies that $h_{\mu}(x)\leq\frac{s}{\theta}(1+\epsilon) $ for all $\epsilon>0$, the result follows.
\end{proof}

\begin{prop}
For all $\theta\geq 1$, $\dimh\Lambda_{\theta}\leq\frac{s}{\theta}$.
\end{prop}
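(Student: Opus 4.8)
The plan is to exhibit an explicit economical cover of $\Lambda_\theta$ and estimate the corresponding sum $\sum (\mathrm{diam})^t$ for $t$ slightly larger than $s/\theta$. Recall that by definition
\[
\Lambda_\theta=\bigcap_{P\ge1}\bigcup_{p\ge P}\Lambda_{\theta,p},
\qquad
\Lambda_{\theta,p}=\bigcup_{\mathbf{i}\in I(2^{-J_{N_p}})}\overline{B}(x_{\mathbf{i}},2^{-\theta J_{N_p}})\cap K,
\]
so for every $P$ the family $\{\Lambda_{\theta,p}\}_{p\ge P}$ already covers $\Lambda_\theta$. Fix $P$; then $\Lambda_\theta$ is covered by the balls $\overline{B}(x_{\mathbf{i}},2^{-\theta J_{N_p}})$ with $p\ge P$ and $\mathbf{i}\in I(2^{-J_{N_p}})$, each of diameter $2\cdot 2^{-\theta J_{N_p}}$. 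Since $N_p\to\infty$ (hence $J_{N_p}\to\infty$), these diameters are uniformly small once $P$ is large, so this is an admissible cover for computing Hausdorff measure.

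The key step is the cardinality bound from Proposition \ref{propositionSelfsimilarset}(1): $\sharp I(2^{-J_{N_p}})\le c_2\,2^{sJ_{N_p}}$. Hence, for any $t>0$, the contribution of the level-$p$ balls to $\mathcal{H}^t$-sums is at most
\[
\sharp I(2^{-J_{N_p}})\cdot\bigl(2\cdot 2^{-\theta J_{N_p}}\bigr)^{t}
\le c_2\,2^{t}\,2^{sJ_{N_p}}\,2^{-\theta t J_{N_p}}
= c_2\,2^{t}\,2^{-(\theta t-s)J_{N_p}}.
\]
Now choose any $t>s/\theta$, so that $\theta t-s>0$. I would then need the series $\sum_{p\ge P}2^{-(\theta t-s)J_{N_p}}$ to be finite and to tend to $0$ as $P\to\infty$; this is where I should be a little careful. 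The sequence $(J_{N_p})$ is strictly increasing in $p$ and integer-valued, hence $J_{N_p}\ge J_{N_P}+(p-P)\ge p-P$ eventually, so $\sum_{p\ge P}2^{-(\theta t-s)J_{N_p}}\le \sum_{m\ge 0}2^{-(\theta t-s)m}<\infty$ and the tail from $P$ is bounded by $C\,2^{-(\theta t-s)J_{N_P}}\to0$. Therefore $\mathcal{H}^t(\Lambda_\theta)=0$ for every $t>s/\theta$, which gives $\dimh\Lambda_\theta\le t$ for all such $t$, hence $\dimh\Lambda_\theta\le s/\theta$.

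The main (and essentially only) obstacle is the bookkeeping in the last paragraph: ensuring the geometric series over $p$ genuinely converges and its tail vanishes, which rests on the fact that distinct indices $p$ correspond to distinct (indeed increasing) values $J_{N_p}$ — a point that should be stated explicitly since $\Lambda_\theta$ is defined as a $\limsup$ over exactly these levels. Everything else is immediate once one invokes the uniform cardinality estimate $\sharp I(R)\le c_2R^{-s}$ from Proposition \ref{propositionSelfsimilarset}. No lower bound on $\sharp I(R)$ is needed here, and the measure $\mu$ plays no role in this particular estimate (it enters only in the companion Proposition \ref{propDimHausdorffLambdaTheta}).
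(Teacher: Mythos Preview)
Your argument is correct and is essentially the same as the paper's: cover $\Lambda_\theta$ by the balls $\overline{B}(x_{\mathbf{i}},2^{-\theta J_{N_p}})$, $p\ge P$, use the cardinality bound $\sharp I(R)\le c_2 R^{-s}$ from Proposition~\ref{propositionSelfsimilarset}, and sum the $t$-th powers to get $\mathcal{H}^t(\Lambda_\theta)=0$ for $t>s/\theta$. The only differences are cosmetic: the paper treats $\theta=1$ separately (unnecessarily), and you are more explicit than the paper about why the tail series $\sum_{p\ge P}2^{-(\theta t-s)J_{N_p}}$ tends to $0$, which the paper simply asserts.
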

\begin{proof}
The result is obvious when $\theta=1$, since $\Lambda\subset K$ and $\mbox{dim}_{H}(K)=s$.

Let $\theta>1$ and $t>\frac{s}{\theta}$. For all $P\geq 1$, $\Lambda_{\theta}$ is covered by $\bigcup_{p\geq P}\Lambda_{\theta,p}$.
Hence, for any $\delta>0$ and using the fact that $\sharp I(R)\leq cR^{-s}$ (see \cite{Quef}) we obtain
\begin{eqnarray*}
\mathcal{H}_{\delta}^{t}(\Lambda_{\theta})&\leq& \mathcal{H}_{\delta}^{t}\left(\bigcup_{p\geq P}\Lambda_{\theta,p}\right) \\
&\leq& \sum_{p\geq P}\sum_{\mathbf{i\in I(2^{-J_{N_{p}}})}}\left|\overline{B}(x_{\mathbf{i}},2^{-\theta J_{N_{p}}})\right|^{t}\\
&\leq& c\sum_{p\geq P}2^{-t\theta J_{N_{p}}}\sharp I(2^{-J_{N_{p}}})\\
&\leq&c \sum_{p\geq P}2^{-t\theta J_{N_{p}}}2^{s J_{N_{p}}}
\end{eqnarray*}
since $t>\frac{s}{\theta}$, this series is convergent. Hence, $\mathcal{H}_{\delta}^{t}(\Lambda_{\theta})\leq c\lim_{P\to\infty}\sum_{p\geq P}2^{-t\theta J_{N_{p}}}2^{s J_{N_{p}}}=0$. Thus, $\mathcal{H}^{t}(\Lambda_{\theta})=\lim_{\delta\to0}\mathcal{H}_{\delta}^{t}(\Lambda_{\theta})=0$. This implies that $\mbox{dim}_{H}(\Lambda_{\theta})\leq t$ for all $t>\frac{s}{\theta}$, then we conclude.
\end{proof}

\bigskip
Let $m$ be any Borel measure on $K$. The Hausdorff dimension of $m$ is defined by
\[\dimh m=\inf\left\{\dimh E\;:\; E\subset K,\;m(E)>0\right\}.\]
When $m(E)>0$ then $\dimh E\geq \dimh m$. In other words,
\begin{equation}\label{propertyDimMesure}
\mbox{if }\dimh E< \dimh m,\;\mbox{then }m(E)=0.
\end{equation}

\bigskip
As in  \cite{BucSeu1} we have the following result
\begin{theorem}\label{thMtheta}
For every $\theta\geq1$, there is a measure $m_{\theta}$  supported in $\Lambda_{\theta}$, a constant $C>0$ and a positive sequence $(\eta_{p})$ decreasing to $0$ such that for every Borel set $B$,
\begin{equation}
\mbox{if }\left|B\right|\leq \eta_{p},\quad m_{\theta}(B)\leq C\left|B\right|^{\frac{s}{\theta}-\frac{2}{p-1}}.
\end{equation}
In particular, $\dimh m_{\theta}\geq \frac{s}{\theta}$.
\end{theorem}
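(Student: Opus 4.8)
The plan is to run a Cantor-type mass distribution inside $\Lambda_\theta$, in the spirit of \cite{BucSeu1}, the only new point being that the geometry of the successive generations is now controlled by Proposition \ref{propositionSelfsimilarset} instead of by dyadic cubes. Since $\Lambda_\theta=\bigcap_{P\ge1}\bigcup_{p\ge P}\Lambda_{\theta,p}$, any measure carried by a decreasing sequence of sets each eventually contained in some $\Lambda_{\theta,p}$ with $p$ arbitrarily large will automatically be supported in $\Lambda_\theta$. First I would extract from $(p)_{p\ge1}$ a very rapidly increasing subsequence $(p_k)_{k\ge1}$; writing $Q_k:=J_{N_{p_k}}$ and $\rho_k:=2^{-\theta Q_k}$, I impose a growth condition of the form $Q_k\ge k\,(\theta-1)\sum_{l=1}^{k-1}Q_l$, which is possible since $(J_N)$ is itself rapidly increasing and $(N_p)$ is increasing, so a subsequence of $p$ makes $Q_k$ as large as wanted compared with $Q_1,\dots,Q_{k-1}$.

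Next I build nested compact sets $K\supset F_1\supset F_2\supset\cdots$, where $F_k$ is a finite union of closed balls $\overline B(x_{\mathbf i},\rho_k)$, $\mathbf i\in I(2^{-Q_k})$, chosen so that $F_k\cap K\subset\Lambda_{\theta,p_k}$. Given a ``generation-$(k-1)$ ball'' $\overline B(x_{\mathbf i},\rho_{k-1})$ of $F_{k-1}$ (with $\mathbf i\in I(2^{-Q_{k-1}})$; for $k=1$ take $Q_0:=0$ and $F_0=K$), I keep inside it those $\mathbf j\in I(2^{-Q_k})$ with $K_{\mathbf j}\subset\overline B(x_{\mathbf i},\tfrac12\rho_{k-1})$, so that $\overline B(x_{\mathbf j},\rho_k)\subset\overline B(x_{\mathbf i},\rho_{k-1})$ for $k$ large. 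Using that $\lambda(K_{\mathbf j})\asymp 2^{-sQ_k}$, that $\lambda$ is $s$-regular, and that $\overline B(x_{\mathbf i},\rho_{k-1})$ meets $K$ in a set of $\lambda$-mass $\asymp\rho_{k-1}^{\,s}=2^{-s\theta Q_{k-1}}$ (Proposition \ref{propositionSelfsimilarset}), the number of admissible $\mathbf j$ is $\asymp 2^{s(Q_k-\theta Q_{k-1})}$; discarding a bounded-density subfamily (the complexes $K_{\mathbf j}$ have bounded overlap at their own scale, which dwarfs $\rho_k$) I may assume the kept balls are pairwise disjoint and keep exactly $M_k:=\lfloor c\,2^{s(Q_k-\theta Q_{k-1})}\rfloor$ of them in each parent. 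Taking $F_k$ to be the union of all these balls and defining $m_\theta$ to split each parent's mass equally among its $M_k$ children, $m_\theta$ is a Borel probability measure carried by $F:=K\cap\bigcap_{k}F_k\subset\Lambda_\theta$, with $m_\theta(B)=\prod_{l=1}^{k}M_l^{-1}$ for every generation-$k$ ball $B$.

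The core is then the Frostman estimate. For a generation-$k$ ball $B$ one has $|B|\asymp\rho_k$ and
\[
m_\theta(B)=\prod_{l=1}^{k}M_l^{-1}\asymp 2^{-s\sum_{l=1}^{k}(Q_l-\theta Q_{l-1})}=2^{-s\left(Q_k-(\theta-1)\sum_{l=1}^{k-1}Q_l\right)},
\]
and the growth condition gives $(\theta-1)\sum_{l<k}Q_l\le Q_k/k$, so $m_\theta(B)\le C^{O(k)}\,2^{-sQ_k(1-1/k)}=C^{O(k)}\,|B|^{\,s/\theta}\,2^{sQ_k/k}$; since $Q_k$ grows super-exponentially, $C^{O(k)}2^{sQ_k/k}\le|B|^{-\delta_k}$ with $\delta_k\to0$. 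For an arbitrary Borel set $B$ with $\rho_{k+1}\le|B|<\rho_k$, the bounded overlap of the complexes $K_{\mathbf j}$ ($\mathbf j\in I(2^{-Q_{k+1}})$) together with the $s$-regularity of $\lambda$ show that $B$ meets $\lesssim|B|^{s}2^{sQ_{k+1}}$ generation-$(k+1)$ balls, so
\[
m_\theta(B)\lesssim|B|^{s}2^{sQ_{k+1}}\prod_{l=1}^{k+1}M_l^{-1}\le C^{O(k)}|B|^{s}2^{sQ_{k+1}/(k+1)}\le C^{O(k)}|B|^{\,s-s/(\theta(k+1))},
\]
using $|B|\ge\rho_{k+1}$ in the last step; since $s-s/(\theta(k+1))\ge s/\theta-s/(\theta(k+1))$ and $C^{O(k)}\le|B|^{-o(1)}$, this is $\le|B|^{\,s/\theta-\delta'_{k+1}}$ with $\delta'_{k+1}\to0$. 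Setting $\eta_p:=\rho_{p-1}$ and strengthening the growth of $(Q_k)$ by a fixed dimension-dependent factor so that $\delta'_k\le\frac2{k-1}$, one obtains $m_\theta(B)\le C|B|^{\,s/\theta-2/(p-1)}$ whenever $|B|\le\eta_p$.

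Letting $p\to\infty$ in this estimate and applying the mass distribution principle, every Borel set $E$ with $m_\theta(E)>0$ has $\dimh E\ge s/\theta$, hence $\dimh m_\theta\ge s/\theta$ (and, together with the bound $\dimh\Lambda_\theta\le s/\theta$ proved above, $\dimh\Lambda_\theta=\dimh m_\theta=s/\theta$). The step I expect to be the main obstacle is precisely this geometric bookkeeping under the \emph{open set} condition only: producing, at each stage, the right number $M_k$ of admissible sub-balls centred at the \emph{prescribed} points $x_{\mathbf j}$ and well inside the parent ball, and arranging disjointness and bounded overlap with constants uniform in $k$; the remaining ingredients are the telescoping computation above and a sufficiently fast choice of the subsequence $(p_k)$.
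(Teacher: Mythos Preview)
Your approach is essentially the paper's: build a Cantor set inside $\Lambda_\theta$ by nesting closed balls centred at the distinguished points $x_{\mathbf j}$, distribute mass uniformly among children, and verify a Frostman bound; the paper simply imposes the growth condition \eqref{eqDecreJnp} on the full sequence $(J_{N_p})$ rather than extracting a further subsequence, and uses the same counting via the $s$-regularity of $\lambda$ (Lemmas \ref{lemNombreIntersecBoule}--\ref{lemCardinalFp}).

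There is, however, a genuine gap in your Frostman step. You treat the whole range $\rho_{k+1}\le|B|<\rho_k$ at once and assert that $B$ meets at most $C\,|B|^{s}2^{sQ_{k+1}}$ generation-$(k{+}1)$ balls. That counting argument (via $K_{\mathbf j}\subset B'$ and $\lambda$-additivity) requires $|B|\gtrsim 2^{-Q_{k+1}}$, the scale of the complexes $K_{\mathbf j}$; in the sub-range $2^{-\theta Q_{k+1}}\le|B|<2^{-Q_{k+1}}$ the right-hand side tends to $0$ while $B$ may perfectly well meet one ball, so the bound is false as stated and the chain of inequalities that follows breaks down. The paper resolves this by splitting $[2^{-J_{N_q}},2^{-J_{N_{q-1}}})$ at the threshold $2^{-\theta J_{N_{q-1}}}$: in the upper sub-range it uses that the generation-$(q{-}1)$ centres are separated at the \emph{coarse} scale $2^{-J_{N_{q-1}}}$ (this is why it enforces disjointness of the balls $B(x_{\mathbf j},2^{-J_{N_{q-1}}})$, not merely of the tiny balls of radius $2^{-\theta J_{N_{q-1}}}$), so that $B$ meets only $O(1)$ elements of $F_{q-1}$ and one applies the mass estimate \eqref{eq1Mtheta} for a single $V\in F_{q-1}$; in the lower sub-range the counting of generation-$q$ balls is legitimate because there $|B|\ge 2^{-J_{N_q}}$. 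You should impose the same coarse-scale separation in your construction (your phrase ``kept balls are pairwise disjoint'' must refer to balls of radius $2^{-Q_k}$, not $\rho_k$) and carry out this two-case split; once that is done, the rest of your sketch goes through exactly as in the paper.
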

\begin{proof}
We will construct a suitable Cantor set $\mathcal{C}_{\theta}$ included in $\Lambda_{\theta}$ and a measure $m_{\theta}$ supported on $\mathcal{C}_{\theta}$ with monofractal behaviour.

\medskip
We suppose that the sequence $(J_{N_{p}})$ is sufficiently rapidly decreasing. Precisely, assume that
\begin{equation}\label{eqDecreJnp}
J_{N_{p+1}}>\max((p+1)\theta J_{N_{p}},e^{J_{N_{p}}}).
\end{equation}

\medskip
The following lemma will be useful for us to control the cardinality of some sets of balls.
\begin{lem}\label{lemNombreIntersecBoule}
Let $R>0$, $\mathbf{i}\in I(R)$. For every $c>0$,  there exists $M\in\N^{*}$ independent of $R$ such that
\begin{equation}
\sharp \left\{\mathbf{j}\in I(R);\;\overline{B}(x_{\mathbf{i}},cR)\cap\overline{B}(x_{\mathbf{j}},cR)\neq\emptyset\right\}\leq M.
\end{equation}
\end{lem}

\begin{proof}
In the sequel we denote by $\lambda$ the measure associated to the self similar $K$, see Proposition \ref{propositionSelfsimilarset} for its properties.

\medskip
Denote by
\[T_{\mathbf{i}}=\sharp \left\{\mathbf{j}\in I(R);\;\overline{B}(x_{\mathbf{i}},cR)\cap\overline{B}(x_{\mathbf{j}},cR)\neq\emptyset\right\}.\]

Since $\overline{B}(x_{\mathbf{i}},cR)\cap\overline{B}(x_{\mathbf{j}},cR)\neq\emptyset$ and $\left|K_{\mathbf{j}}\right|=\left|K\right| \alpha_{\mathbf{i}}\leq \left|K\right| R$ then there exists a constant $a$ independent of $R$ such that
\[K_{\mathbf{j}}\subset B(x_{\mathbf{i}},a R).\]
Hence
\[T_{\mathbf{i}}\leq \sharp \left\{\mathbf{j}\in I(R):\; K_{\mathbf{j}}\subset B(x_{\mathbf{i}},a R)\cap K \right\}.\]
 It follows that
 \[\lambda\left(\bigcup_{\mathbf{j};\;K_{\mathbf{j}}\subset B(x_{\mathbf{i}},a R)}K_{\mathbf{j}}\right)\leq \lambda\left(B(x_{\mathbf{i}},a R)\cap K\right)\leq cR^{s}.\]

Since for $\mathbf{j}\neq\mathbf{j'}\in I(R)$ $\lambda\left(K_{\mathbf{j}}\cap K_{\mathbf{j'}}\right)=0$, we obtain
\[\sum_{\mathbf{j}\in I(R):\; K_{\mathbf{j}}\subset B(x_{\mathbf{i}},a R)}\lambda(K_{\mathbf{j}})\leq c R^{s}.\]
We know that for all $\mathbf{j}\in I(R)$, $c'R^{s}\leq \lambda(K_{\mathbf{j}})$, which gives
\[c'\sharp \left\{\mathbf{j}\in I(R):\; K_{\mathbf{j}}\subset B(x_{\mathbf{i}},a R)\cap K\right\} R^{s}\leq c R^{s}.\]
Then
\[\sharp \left\{\mathbf{j}\in I(R):\; K_{\mathbf{j}}\subset B(x_{\mathbf{i}},a R)\cap K\right\}\leq \frac{c}{c'}.\]
Since $T_{\mathbf{i}}\leq \sharp \left\{\mathbf{j}\in I(R):\; K_{\mathbf{j}}\subset B(x_{\mathbf{i}},a R)\cap K \right\}$ we get the desired result.
\end{proof}

\medskip
Denote by $\widetilde{F}_{1}$ a set formed by the largest number of disjoint balls $B(x_{\mathbf{i}},2^{- J_{N_{1}}})$, $\mathbf{i}\in I(2^{-J_{N_{1}}})$. We denote by
\[D_{1}=\left\{\mathbf{i}\in I(2^{-J_{N_{1}}}):\;x_{\mathbf{i}}\mbox{ is a center of ball in }\widetilde{F}_{1}\right\}.\]
Then, we set
\[F_{1}=\left\{\overline{B}(x_{\mathbf{i}},2^{-\theta J_{N_{1}}})\cap K:\;\mathbf{i}\in D_{1}\right\}.\]

We denote by $\Delta_{1}=\sharp F_{1}$. Remark that $\sharp F_{1}=\sharp\widetilde{F}_{1}$. Using the Lemma \ref{lemNombreIntersecBoule}, we get
\[\frac{\sharp I(2^{-J_{N_{1}}})}{M}\leq\Delta_{1}\leq \sharp I(2^{-J_{N_{1}}}).\]
Hence
  \[c\frac{2^{sJ_{N_{1}}}}{M}\leq\Delta_{1}\leq c'2^{sJ_{N_{1}}}.\]

We define a probability measure $m_{1}$  by giving the value $m_{1}(V)=\frac{1}{\Delta_{1}}$ for each element $V\in F_{1}$ and then we extend $m_{1}$ to a Borel probability measure on the algebra generated by $F_{1}$, i.e. on $\sigma(V:\;V\in F_{1})$.

\medskip
Assume that we have constructed $F_{1},\ldots, F_{p}$, $p\geq 1$ and a measure $m_{p}$ on the algebra $\sigma(V:\;V\in F_{p})$. Let $V\in F_{p}$. There exists $\mathbf{i}\in I(2^{-J_{N_{p}}})$ such that $V=\overline{B}(x_{\mathbf{i}},2^{-\theta J_{N_{p}}})\cap K$.

\medskip
Let $r_{p}=2^{-\theta J_{N_{p}}}-\left|K\right|2^{-J_{N_{p+1}}}$. We have $\frac{1}{2}2^{-\theta J_{N_{p}}}\leq r_{p}\leq 2^{-\theta J_{N_{p}}}$. Let us consider
\[D_{p,\mathbf{i}}=\left\{\mathbf{j}\in I(2^{-J_{N_{p+1}}}):\; K_{\mathbf{j}}\cap B(x_{\mathbf{i}},r_{p})\neq\emptyset \right\}.\]

\begin{lem}\label{lemCardinalDpi}
There exist two constants $c_{1},c_{2}>0$ such that for all $p$
\begin{equation}
c_{1}2^{-s\theta J_{N_{p}}}2^{sJ_{N_{p+1}}}\leq \sharp D_{p,\mathbf{i}}\leq c_{2}2^{-s\theta J_{N_{p}}}2^{sJ_{N_{p+1}}}.
\end{equation}

\end{lem}
\begin{proof}
Recall that $K=\bigcup_{\mathbf{j}\in I(2^{-J_{N_{p+1}}})}K_{\mathbf{j}}$. Then,
\begin{eqnarray*}
B(x_{\mathbf{i}},r_{p})\cap K&=&\bigcup_{\mathbf{j}\in I(2^{-J_{N_{p+1}}})}B(x_{\mathbf{i}},r_{p})\cap K_{\mathbf{j}}\\
&=&\bigcup_{\mathbf{j}\in D_{p,\mathbf{i}}}B(x_{\mathbf{i}},r_{p})\cap K_{\mathbf{j}}
\end{eqnarray*}
thus
\begin{eqnarray*}
\lambda\left(B(x_{\mathbf{i}},r_{p})\cap K\right)&=&\lambda\left(\bigcup_{\mathbf{j}\in D_{p,\mathbf{i}}}B(x_{\mathbf{i}},r_{p})\cap K_{\mathbf{j}}\right)\\
&\leq& \sum_{\mathbf{j}\in D_{p,\mathbf{i}}}\lambda(K_{\mathbf{j}})\\
&\leq& c \sharp D_{p,\mathbf{i}} 2^{-sJ_{N_{p+1}}}
\end{eqnarray*}
But $\lambda\left(B(x_{\mathbf{i}},r_{p})\cap K\right)\geq c'r_{p}^{s}\geq c''2^{-s\theta J_{N_{p}}}$, hence there exists $c_{1}$ such that
\[c_{1}2^{-s\theta J_{N_{p}}}2^{sJ_{N_{p+1}}}\leq \sharp D_{p,\mathbf{i}}.\]

In the other hand, for $\mathbf{j}\in D_{p,\mathbf{i}}$, $K_{\mathbf{j}}\cap B(x_{\mathbf{i}},r_{p})\neq\emptyset$, this implies that $K_{\mathbf{j}}\subset B(x_{\mathbf{i}},2^{-\theta J_{N_{p}}}) $ (since $\left|K_{\mathbf{j}}\right|\leq \left|K\right|2^{-J_{N_{p+1}}}$). Thus,
\[\bigcup_{\mathbf{j}\in D_{p,\mathbf{i}}} K_{\mathbf{j}}\subset B(x_{\mathbf{i}},2^{-\theta J_{N_{p}}})\cap K\]
hence
\begin{eqnarray*}
\lambda\left(\bigcup_{\mathbf{j}\in D_{p,\mathbf{i}}} K_{\mathbf{j}}\right)&\leq& \lambda\left(B(x_{\mathbf{i}},2^{-\theta J_{N_{p}}})\cap K\right)\\
&\leq& c 2^{-s\theta J_{N_{p}}}
\end{eqnarray*}
But, $\lambda\left(\bigcup_{\mathbf{j}\in D_{p,\mathbf{i}}} K_{\mathbf{j}}\right)=\sum_{\mathbf{j}\in D_{p,\mathbf{i}}}\lambda\left(K_{\mathbf{j}}\right)$ (since $\lambda\left(K_{\mathbf{j}}\cap K_{\mathbf{j'}}\right)=0$ for $\mathbf{j}\neq\mathbf{j'}\in I(2^{-J_{N_{p+1}}})$). Thus

 \[ \sum_{\mathbf{j}\in D_{p,\mathbf{i}}}\lambda\left(K_{\mathbf{j}}\right) \leq c 2^{-s\theta J_{N_{p}}}\]
 but
 \[\sum_{\mathbf{j}\in D_{p,\mathbf{i}}}\lambda\left(K_{\mathbf{j}}\right)\geq c' \sharp D_{p,\mathbf{i}} 2^{-sJ_{N_{p+1}}}\]
 thus, there exists $c_{2}$ such that
 \[\sharp D_{p,\mathbf{i}}\leq c_{2}2^{-s\theta J_{N_{p}}}2^{sJ_{N_{p+1}}}.\]
\end{proof}

\medskip
Let us define the set $\widetilde{F}_{p+1}(V)$ formed by the largest number of balls $B(x_{\mathbf{j}},2^{-\theta J_{N_{p+1}}}):\; \mathbf{j}\in D_{p,\mathbf{i}}$ such that if $\mathbf{j}\neq\mathbf{j}'\in D_{p,\mathbf{i}}$ we have
\[B(x_{\mathbf{j}},2^{-J_{N_{p+1}}})\cap B(x_{\mathbf{j'}},2^{-J_{N_{p+1}}})=\emptyset.\]
Then, we set
\[F_{p+1}(V)=\left\{\overline{U}\cap K:\; U\in \widetilde{F}_{p+1}(V)\right\}.\]
Remark that for all $U\in F_{p+1}(V)$, $U\subset V=\overline{B}(x_{\mathbf{i}},2^{-\theta J_{N_{p}}})\cap K$.

\medskip
We have the following lemma
\begin{lem}\label{lemCardinalFp}
There exist two constants $c'_{1},c'_{2}>0$ such that for all $p$
\begin{equation}
c'_{1}2^{-s\theta J_{N_{p}}}2^{sJ_{N_{p+1}}}\leq \sharp F_{p+1}(V)\leq c'_{2}2^{-s\theta J_{N_{p}}}2^{sJ_{N_{p+1}}}.
\end{equation}
\end{lem}
\begin{proof}
We have $\sharp F_{p+1}(V)=\sharp \widetilde{F}_{p+1}(V)$. Since $\sharp \widetilde{F}_{p+1}(V)\leq \sharp D_{p,\mathbf{i}}$, we get obviously the second inequality.

Using the lemma \ref{lemNombreIntersecBoule} we can pick at least $\frac{c'}{M}\sharp D_{p,\mathbf{i}}$ element of $\widetilde{F}_{p+1}(V)$ such that the balls of radius $2^{-J_{N_{p}}}$ are disjoint. Since $\widetilde{F}_{p+1}(V)$ is of largest cardinality  then we conclude that
\[\frac{c'}{M}\sharp D_{p,\mathbf{i}}\leq \widetilde{F}_{p}(V)\]
and then we get the first inequality by using the lemme \ref{lemCardinalDpi}.
\end{proof}

\medskip
Now we define  $F_{p+1}=\bigcup_{V\in F_{p}}F_{p}(V)$.
We define a probability measure $m_{p+1}$ by giving the mass $m_{p+1}(U)=\frac{m_{p}(V)}{\sharp F_{p+1}(V)}$, where $V$ is the unique element of $F_{p}$ containing $U$. We extend then $m_{p+1}$ to $\sigma(U:\;U\in F_{p+1})$.

Finally we set
\[\mathcal{C}_{\theta}=\bigcap_{p\geq1}\bigcup_{V\in F_{p}}V.\]

\noindent By the Kolmogorov extension theorem, $(m_{p})_{p\geq1}$ converges weakly to a Borel probability measure $m_{\theta}$ supported on $\mathcal{C}_{\theta}$ and such that for every $p\geq 1$, for every $V\in F_{p}$, $m_{\theta}(V)=m_{p}(V)$.

\subsubsection{Hausdorff dimension of $\mathcal{C}_{\theta}$ and $m_{\theta}$} As is \cite{BucSeu1} we first prove that $m_{\theta}$ has an almost monofractal behavior on set belonging to $\bigcup_{p}F_{p}$.

\begin{lem}
When $p$ is sufficiently large, for every $V\in F_{p}$
\begin{equation}\label{eq1Mtheta}
2^{-sJ_{N_{p}}(1+\frac{1}{p})}\leq m_{\theta}(V)\leq 2^{-sJ_{N_{p}}(1-\frac{2}{p})}
\end{equation}
and
\begin{equation}\label{eq2Mtheta}
\left|V\right|^{\frac{s}{\theta}+\frac{1}{\left|\log\left|V\right|\right|}}\leq m_{\theta}(V)\leq \left|V\right|^{\frac{s}{\theta}-\frac{1}{\left|\log\left|V\right|\right|}}.
\end{equation}
\end{lem}
\begin{proof}
Let $V\in F_{p}$. We denote  $\Delta_{p+1}(V)=\sharp F_{p+1}(V)$ (recall that $F_{p+1}(V)$ is the set of element of $F_{p+1}$ included in $V$). For $k\leq p$ denote by $V_{k}$ the unique element in $F_{k}$ containing $V$. By construction of the measure $m_{\theta}$ we obtain
\[m_{\theta}(V)=\left(\prod_{k=1}^{p}\Delta_{k}(V_{k-1})\right)^{-1}.\]
Using  Lemma \ref{lemCardinalFp}, there exist two constants $c'_{1},c'_{2}>0$ such that
\[c'_{1}2^{-s\theta J_{N_{k}}}2^{sJ_{N_{k-1}}}\leq \Delta_{k}(V_{k-1})\leq c'_{2}2^{-s\theta J_{N_{k-1}}}2^{sJ_{N_{k}}}\]

\noindent by (\ref{eqDecreJnp}) and the fact that $2^{-s\theta J_{N_{k-1}}}\leq1$ we get
\[c'_{1}2^{sJ_{N_{k}}(1-\frac{1}{k})}\leq\Delta_{k}(V_{k-1})\leq c'_{2}2^{sJ_{N_{k}}}.\]
Hence
\[(c'_{2})^{-p}\left(\prod_{k=1}^{p}2^{sJ_{N_{k}}}\right)^{-1}\leq m_{\theta}(V)\leq (c'_{1})^{-p}\left(\prod_{k=1}^{p}2^{sJ_{N_{k}}(1-\frac{1}{k})}\right)^{-1}.\]
Recalling that by (\ref{eqDecreJnp}), $J_{N_{k}}>e^{J_{N_{k-1}}}$ for every $k$, thus
\[\lim_{p\to+\infty}\frac{p}{J_{N_{p}}}\left(p\frac{\log c'_{2}}{s\log2}+\sum_{k=1}^{p-1}J_{N_{k}}\right)=0\]
(since $\sum_{k=1}^{p-1}J_{N_{k}}\leq (p-1)J_{N_{p-1}}\leq (J_{N_{p-1}})^{2}$, we get $\frac{p\sum_{k=1}^{p-1}J_{N_{k}}}{J_{N_{p}}}\leq p(J_{N_{p-1}})^{2}e^{-J_{N_{p-1}}}\leq (J_{N_{p-1}})^{3}e^{-J_{N_{p-1}}}\underset{p\to+\infty}{\rightarrow}0$).
Thus there exists $p_{1}$ such that for all $p\geq p_{1}$,
\[(c'_{2})^{-p}\left(\prod_{k=1}^{p-1}2^{sJ_{N_{k}}(1-\frac{1}{k})}\right)^{-1}\geq 2^{-\frac{s}{p}J_{N_{p}}} \]
hence, for all $p\geq p_{1}$,
\[2^{-sJ_{N_{p}}(1+\frac{1}{p})}\leq m_{\theta}(V).\]

As previously, $\lim_{p\to+\infty}\frac{p}{J_{N_{p}}}\left(p\frac{\log c'_{1}}{s\log2}+\sum_{k=1}^{p-1}J_{N_{k}}(1-\frac{1}{k})\right)=0$. Thus there exists $p_{2}$ such that for all $p\geq p_{2}$
\[(c'_{1})^{-p}\left(\prod_{k=1}^{p-1}2^{sJ_{N_{k}}(1-\frac{1}{k})}\right)^{-1}\leq 2^{\frac{s}{p}J_{N_{p}}}\]
hence for all $p\geq p_{2}$
\[m_{\theta}(V)\leq 2^{-sJ_{N_{p}}(1-\frac{2}{p})}.\]

To prove (\ref{eq2Mtheta}), remark that for all $V\in F_{p}$, $\left|V\right|\approx 2^{-\theta J_{N_{p}}}$ (where $\approx$ means that the ratio of the two quantities is bounded from below and above by two positives constants independents of $p$ ). Then, $p=o(\left|\log\left|V\right|\right|)$. Thus (\ref{eq1Mtheta}) yields (\ref{eq2Mtheta}).
\end{proof}
Now we extend $(\ref{eq2Mtheta})$ to all Borel subsets of $K$.
\begin{lem}
There is two positive sequences $(\eta_{p})_{p}$, decreasing to $0$ and a constant $C>0$ such that for any Borel set $B\subset K$ with $\left|B\right|\leq \eta_{p}$ we have
\begin{equation}\label{eqMthetaBorel}
m_{\theta}(B)\leq \left|B\right|^{\frac{s}{\theta}-\frac{2}{p-1}}.
\end{equation}
\end{lem}

\begin{proof}
We follow the same ideas of \cite{BucSeu1}. Let $\eta_{p}=2^{-J_{N_{p}}}$. Let $B$ be a Borel set such that $B\subset K$ with $\left|B\right|<\eta_{p}$. Let $q\geq p+1$ the unique integer such that
\[2^{-J_{N_{q}}}\leq \left|B\right|<2^{-J_{N_{q-1}}}.\]

\medskip
Since for all $V,V'\in F_{q-1}$, $V=\overline{B}(x_{\mathbf{j}},2^{-\theta J_{N_{q-1}}})\cap K$, $V'=\overline{B}(x_{\mathbf{j'}},2^{-\theta J_{N_{q-1}}})\cap K$, we have $\overline{B}(x_{\mathbf{j}},2^{-J_{N_{q-1}}})\cap\overline{B}(x_{\mathbf{j'}},2^{-J_{N_{q-1}}})=\emptyset$ then  $B$ intersect at most $C$ elements of $F_{q-1}$, where $C$ is a constant independent of $p$.

\medskip
Let us distinguish two cases

$\bullet$ $2^{-\theta J_{N_{q-1}}}\leq \left|B\right|<2^{-J_{N_{q-1}}}$: if $B$ dont intersect no one of $F_{q-1}$ then $m_{\theta}(B)=0$. Otherwise, denoting by $V$ any one of $F_{q-1}$ intersecting $B$. Using (\ref{eq1Mtheta}) we have
\begin{eqnarray*}
m_{\theta}(B)&\leq& C m_{\theta}(V)\leq C2^{-sJ_{N_{q-1}}(1-\frac{2}{q-1})}\\
&\leq&C\left|B\right|^{\frac{s}{\theta}(1-\frac{2}{q-1})}\leq \left|B\right|^{\frac{s}{\theta}(1-\frac{2}{p-1})}.
\end{eqnarray*}

$\bullet$ $2^{-J_{N_{q}}}\leq \left|B\right|<2^{-\theta J_{N_{q-1}}}$: Let $V\in F_{q-1}$ that intersect $B$ (if there is no such one then $m_{\theta}(B)=0$). We have proved that for any $U\in F_{q}$, such that $U\subset V$,

\[m_{\theta}(U)=\frac{m_{\theta(V)}}{\Delta_{q}(V)}.\]

By Lemma \ref{lemCardinalFp} we have $\Delta_{q}(V)\geq c'_{1}2^{-s\theta J_{N_{q-1}}}2^{sJ_{N_{q}}}$, then
\begin{equation}\label{eqMthetaUFp}
m_{\theta}(U)\leq \frac{1}{c'_{1}} m_{\theta}(V)2^{s\theta J_{N_{q-1}}-sJ_{N_{q}}}.
\end{equation}
In the other hand, since $B$ is within a ball of side length $C\left|B\right|$, where $C\geq \max\{2,\left|K\right|\}$, the number of elements of $F_{q}$ that intersecting $B$ is less than $c\left|B\right|^{s}2^{sJ_{N_{q}}}$.

Indeed, $B\subset B'$ where $B'$ is a ball of side length  $C\left|B\right|$. If $U=\overline{B}(x_{\mathbf{j}},2^{-\theta J_{N_{q}}})\cap K$ such that $U\cap B\neq\emptyset$, then $K_{\mathbf{j}}\subset B'$ (since $\left|K_{\mathbf{j}}\right|\leq \left|K\right|2^{-J_{N_{q}}}$ and then $K_{\mathbf{j}}\subset B(x_{\mathbf{j}},\left|K\right|2^{-J_{N_{q}}})\subset B'$). Denote by $L$ the set of $U\in F_{p}$ such that $U\cap B=\emptyset$ and $S$ the set of $\mathbf{j}\in I(2^{-J_{N_{q}}})$ such that $K_{\mathbf{j}}\subset B'$, for all such $\mathbf{j}$ we have $K_{\mathbf{j}}\subset B'\cap K$. We have $\sharp L\leq \sharp S$. But
\[\lambda\left(\bigcup_{\mathbf{j}\in S}K_{\alpha}\right)\leq \lambda\left(B'\cap K\right)\leq c\left|B'\right|^{s}\leq c'\left|B\right|^{s}\]
since
\[\lambda\left(\bigcup_{\mathbf{j}\in S}K_{\alpha}\right)=\sum_{\mathbf{j}\in S}\lambda\left(K_{\mathbf{i}}\right)\geq c''\sharp S2^{-sJ_{N_{q}}}\]
we get
\[\sharp L\leq \sharp S\leq c\left|B\right|^{s}2^{sJ_{N_{q}}}.\]

Hence, gathering all the estimation above and the fact that $\left|B\right|^{-\frac{1}{\theta}}>2^{J_{N_{q-1}}}$ we get

\begin{eqnarray*}
m_{\theta}(B)&\leq&\sum_{U\in L}m_{\theta}(U)\\
&\leq& c'\left|B\right|^{s}2^{sJ_{N_{q}}}m_{\theta}(V)2^{s\theta J_{N_{q-1}}+sJ_{N_{q}}}\leq C\left|B\right|^{s}2^{s\theta J_{N_{q-1}}}m_{\theta}(V)\\
&\leq&C\left|B\right|^{s}2^{s\theta J_{N_{q-1}}}2^{-sJ_{N_{q-1}}(1-\frac{2}{p-1})}\leq C\left|B\right|^{s}2^{sJ_{N_{q-1}}(\theta-1+\frac{2}{q-1})}\\
&\leq&C\left|B\right|^{s}\left|B\right|^{-\frac{1}{\theta}s(\theta-1+\frac{2}{q-1})}\leq C\left|B\right|^{\frac{s}{\theta}(1-\frac{2}{p-1})}.
\end{eqnarray*}
\end{proof}

Already we have all the ingredients to finish the proof of Theorem \ref{maintheorem} with the same way as in \cite{BucSeu1} by considering the sets
\[\widetilde{E}_{\mu}(h)=\left\{x\in K:\;h_{\mu}(x)\leq h\right\}=\bigcup_{h'\leq h}E_{\mu}(h).\]
For safe completeness we recover their idea.
\begin{prop}\label{propSpectre}
For any $h\in]0,s]$, $d_{\mu}(h)=h$.
\end{prop}

\begin{proof}
Let $h\in]0,s]$, and $\theta=\frac{s}{\theta}$.

A standard result claim that for all $h\geq 0$,
\begin{equation}\label{eqdimEmuhtilde}
\dimh \widetilde{E}_{\mu}(h)\leq \min\left\{h,d\right\}.
\end{equation}

Then, by Proposition \ref{propDimHausdorffLambdaTheta}, $\Lambda_{\theta}\subset \widetilde{E}_{\mu}(h)$. Let us write
\[\Lambda_{\theta}=\left(\Lambda_{\theta}\cap E_{\mu}(h)\right)\bigcup \left(\bigcup_{n\geq1}\Lambda_{\theta}\cap\widetilde{E}_{\mu}(h-\frac{1}{n})\right).\]
Now, consider the measure $m_{\theta}$ provided by Theorem \ref{thMtheta} which is supported by the Cantor set $\mathcal{C}_{\theta}\subset \Lambda_{\theta}$. We have then
\[m_{\theta}(\Lambda_{\theta})\geq m_{\theta}(\Lambda_{\theta})>0.\]
By (\ref{eqdimEmuhtilde}), for any $n\geq 1$, $\dimh\left(\Lambda_{\theta}\cap\widetilde{E}_{\mu}(h-\frac{1}{n})\right)\leq h-\frac{1}{n}<h$. Since $\dimh m_{\theta}\geq \frac{s}{\theta}=h$, then by property (\ref{propertyDimMesure}) we deduce that $m_{\theta}\left(\Lambda_{\theta}\cap\widetilde{E}_{\mu}(h-\frac{1}{n})\right)=0$.

\medskip
Then, we get $m_{\theta}(\Lambda_{\theta})=m_{\theta}\left(\Lambda_{\theta}\cap E_{\mu}(h)\right)>0$. Hence, again by property (\ref{propertyDimMesure})
\[\dimh E_{\mu}(h)\geq \dimh \Lambda_{\theta}\cap E_{\mu}(h)\geq \frac{s}{\theta}=h.\]
Finally, the upper bound results from the inclusion $E_{\mu}(h)\subset \widetilde{E}_{\mu}(h)$. This finish the proof of the Proposition \ref{propSpectre}.
\end{proof}

\bigskip
It remains the case $h=0$.

\medskip
Let $x_{0}\in K$ any fixed point. By Theorem \ref{th1}, there exists a $G_{\delta}$ set $\Omega(x_{0})$ of $\mathcal{M}(K)$ such that for all $\mu\in \Omega(x_{0})$, $h_{\mu}(x_{0})=0$. Thus, for all $\mu\in \Omega(x_{0})$, $x_{0}\in E_{\mu}(0)$. Hence, for all $\mu\in \Omega(x_{0})$, $E_{\mu}(0)\neq\emptyset$. Thus, for all $\mu\in \Omega(x_{0})$, $\dimh E_{\mu}(0)\geq 0 $. As already we have the upper bound, then for all $\mu\in \Omega(x_{0})$, $\dimh E_{\mu}(0)=0$.

\bigskip
Consider $\Omega=\Omega'\cap\Omega''\cap\Omega(x_{0})$. $\Omega$ is a $G_{\delta}$ set of $\mathcal{M}(K)$ and for all $\mu in\Omega$, $\mu$ satisfy all the points of Theorem \ref{maintheorem}.

\medskip
It remains for us to show that any $\mu\in \Omega$ satisfies the multifractal formalism.

\medskip
Let $\mu\in \mathcal{M}(K)$. We denote by $N_{j}(K)$, the number of cubes of $\mathcal{G}_{j}$ that intersect $K$. By the concavity of $t\mapsto t^{q}$, for $q\in[0,1]$, we have for any $q\in[0,1]$
\begin{eqnarray*}
\sum_{Q\in\mathcal{G}_{j},\mu(Q)\neq 0}\mu(Q)^{q}&=&\sum_{Q\in\mathcal{G}_{j},K\cap Q\neq \emptyset}\mu(Q)^{q}\\
&\leq& N_{j}(K)\left(\frac{1}{N_{j}(K)}\sum_{Q\in\mathcal{G}_{j},K\cap Q\neq \emptyset}\mu(Q)\right)^{q}=N_{j}(K)^{1-q}.
\end{eqnarray*}
Thus, for all $q\in[0,1]$
\begin{eqnarray*}
\tau_{\mu}(q)=\liminf_{j\to+\infty}-\frac{\log\sum_{Q\in\mathcal{G}_{j},\mu(Q)\neq0 }\mu(Q)^{q}}{j\log2}&\geq&(q-1)\limsup_{j\to\infty}\frac{\log N_{j}(K)}{j\log2}\\
&=&(q-1)\dimbsup(K)=(q-1)s.
\end{eqnarray*}

\medskip
Let $\mu\in\Omega$. From (\ref{majholdmu}) it follows that for all $h\in[0,s]$,
\begin{equation}\label{eqEncadLegendreSpectre}
d_{\mu}(h)=h\leq\left(\tau_{\mu}\right)^{*}(h)\leq \inf_{q\in[0,1]}(qh-\tau_{\mu}(q)).
\end{equation}
Hence, for all $h\in[0,s]$, for all $q\in[0,1]$,  $\tau_{\mu}(q)\leq (q-1)h$. In particular, for $h=s$, for all $q\in[0,1]$, $\tau_{\mu}(q)\leq (q-1)s$. As we already have the lower bound, we conclude that for all $q\in[0,1]$, $\tau_{\mu}(q)=(q-1)s$. Then, for all $h\in[0,s]$,
\[\inf_{q\in[0,1]}(qh-\tau_{\mu}(q))=h=d_{\mu}(h).\]
Thus, the inequalities of (\ref{eqEncadLegendreSpectre}) turn to be equalities. Hence, for all $h\in[0,s]$,
\[d_{\mu}(h)=h=\left(\tau_{\mu}\right)^{*}(h).\]

\end{proof}


\begin{thebibliography}{00}
\bibitem{BucNag} Z. Buczolich and J.Nagy, Hölder spectrum of typical monotone continuous functions. Real Anal. Exchange {\bf{26}} (2000/1), 133-156.

\bibitem{BucSeu1} Z. Buczolich and S. Seuret, Typical Borel measure on $[0,1]^{d}$ satisfy a multifractal formalism. Nonlinearity {\bf{23}}(11), 2010.

\bibitem{BucSeu2} Z. Buczolich and S. Seuret, Multifractal spectrum and generic properties of functions monotone in several variables. J. Math. Anal. and Applications, \textbf{382}(1), 110-126, 2011.

\bibitem{Fal1} K. Falconer, Fractal Geometry. John Wiley, Second Edition, 2003.


\bibitem{FraJaf} A. Fraysse and S. Jaffard, How smooth is almost every function in Sobolev space?. Revista Matematica Iberoamericana, \textbf{22}, N.2, pp 663-682, (2006).

\bibitem{FraJafKah} A. Fraysse, S. Jaffard and J.-P. Kahane, Some generic properties in analysis. C.R.A.S \textbf{340} série 1, pp 645-651, (2005).

\bibitem{Hut} J. E. Hutchinson, Fractals and self similarity. Indiana. Univ. Math. \textbf{30} (1981), 713-747.

\bibitem{Kus} S. Kusuoka, Diffusion processes in nested fractals, in  "Statitistical Mecanics and Fractals". Lect. Notes in Math. \textbf{1567}. Springer V., 1993.

\bibitem{Matt} P. Mattila, Geometry of sets and measures in euclidean spaces (fractals and rectifiability). Cambridge University Press.
\bibitem{Mos} U. Mosco, Variational fractals, Anna. della Scu. Norm. Super. di Pisa, Classe di Scienze $4^{e}$ série, tome 25, no. 3-4 (1997), p. 683-712.

\bibitem{Ols1} L. Olsen, Typical $L^{q}-$dimensions of measures. Monatsh. Math. {\bf{146}} (2005), no. 2, 143-157.
\bibitem{Ols2} L. Olsen, Typical $L^{q}-$dimensions of measures for $q\in[0,1]$. Bull. Sci. Math. {\bf{132}} (2008), no. 7, 551-561.


\bibitem{Quef} H. Queffélec, Topologie - 2e ed.: Cours et exercices corrigés. Dunod 2002.


\end{thebibliography}
 \end{document}